\theoremstyle{plain}
\newtheorem{theorem}{Theorem}[section]
\newtheorem*{theorem*}{Theorem}
\newtheorem{definition}[theorem]{Definition}
\newtheorem{lemma}[theorem]{Lemma}
\newtheorem{corollary}[theorem]{Corollary}
\newtheorem{proposition}[theorem]{Proposition}
\newtheorem{fact}[theorem]{Fact}
\theoremstyle{definition}
\newtheorem{remark}[theorem]{Remark}
\newtheorem{example}[theorem]{Example}
\newtheorem{examples}[theorem]{Examples}
\newtheorem*{claim*}{Claim}
\newtheorem*{definition*}{Definition}
\newcommand{\C}{\mathcal C}
\renewcommand{\H}{\mathcal H}
\newcommand{\K}{\mathbb K}
\newcommand{\U}{\mathcal U}
\newcommand{\V}{\mathcal V}
\newcommand{\reel}{\mathbb R}
\newcommand{\To}{\mathop{\longrightarrow}}
\newcommand{\clspan}{\overline{\rm span\,}}
\newcommand{\Span}{\mathrm{span}\,}
\newcommand{\bbar}{\overline}
\begin{document}
\title[Asymptotically hilbertian spaces and uncountable categoricity]{\rm Asymptotically hilbertian\\ modular Banach spaces:\\examples of uncountable categoricity}

\author[C.\ W.\ Henson]{C.\ Ward Henson*}

\address{C. Ward \textsc{Henson} \\
  University of Illinois at Urbana-Champaign \\
  Urbana, Illinois 61801 \\
  USA}

\author[Y.\ Raynaud]{Yves Raynaud}

\address{Yves Raynaud, Institut de Math\'ematiques de Jussieu-Paris Rive Gauche, CNRS/UPMC (Univ.Paris 06)/Univ. Paris-Diderot, 4 place Jussieu, F-75252 Paris Cedex 05, France}
\subjclass[2010]{Primary: 46B04, 46B45;
Secondary: 03C20, 03C35.}
\keywords{Banach spaces, ultraproducts, elementary equivalence, categoricity, isometric embeddings, modular direct sums}

\thanks{(*) Research of this author was supported by grant \#202251 from the Simons Foundation.\\
Part of the work was carried out during the Universality and Homogeneity program at the Hausdorff Institute of the University of Bonn (Fall 2013).}
\maketitle
\rightline{}

\begin{abstract}

 We give a criterion ensuring that the elementary class of a modular Banach space $E$ 
(that is, the class of Banach spaces, some ultrapower of which is linearly isometric to an ultrapower of $E$)  consists of all direct sums $E\oplus_m H$, where $H$ is an arbitrary Hilbert space and $\oplus_m$ denotes the modular direct sum.  Also, we give several families of examples in the class of Nakano direct sums of finite dimensional normed spaces that satisfy this criterion. This yields many new examples of uncountably categorical Banach spaces, in the model theory of Banach space structures.
\end{abstract}
\rightline{}
\section{Introduction}

The aim of this paper is to give some new examples of uncountably categorical Banach space structures. The motivation is model-theoretic, but here we formulate our objectives and methods of proof in the framework of ordinary Banach space theory, using the well known ultrapower construction \cite{H}. 

To begin, we give some terminology and explain briefly the model-theoretic background.  Two Banach spaces  $X$, $Y$ are {\it elementarily equivalent} if for some ultrafilters $\U$ and $\V$, their respective ultrapowers $X_\U$ and $Y_\V$ are linearly isometric. This is an equivalence relation, although its transitivity is not evident. In fact this relation is identical to that of approximate elementary equivalence in the first author's logic for normed space structures \cite[Discussion p.\ 5 and Theorem 10.7]{HI} and also to that of elementary equivalence in continuous logic applied to the unit balls \cite[Definition 4.3, Corollary 5.6 and Theorem 5.7]{BBHU}. These logical counterparts are defined to mean that the two Banach spaces satisfy the same sentences having the appropriate syntactic form, and this makes it clear that the elementary equivalence relation defined using ultrapowers is transitive. The  class of Banach spaces that are elementarily equivalent to a given Banach space is called the \emph{elementary class} of this Banach space; it consists of all ultraroots of ultrapowers of the given space. (An {\it ultraroot} of a Banach space $X$ is a Banach space $Y$, an ultrapower of which is linearly isometric to $X$).

It follows from basic results of model theory, the Compactness Theorem as well as the L\"owenheim-Skolem Theorems (in either of the two logics mentioned above), that the elementary class of any infinite dimensional Banach space contains spaces of all infinite density characters. If $\kappa$ is an infinite cardinal number, an infinite dimensional Banach space $X$ is said to be {\it $\kappa$-categorical} if its elementary class contains exactly one member of density character $\kappa$. It is called {\it uncountably categorical} if it is $\kappa$-categorical for some uncountable cardinal $\kappa$; in that case it is $\kappa$-categorical for all uncountable cardinals $\kappa$, as proved independently in \cite{BY-08} and \cite{SU-11}. 

Note that uncountable categoricity is a property of the elementary class of $X$ rather than of $X$ by itself, so we should speak of an uncountably categorical class of Banach spaces.  So we say that a class $\C$ of Banach spaces is uncountably categorical if and only if $\C$ is closed under ultrapowers and under ultraroots (hence it is closed under elementary equivalence), and it contains only one member $X_\kappa$ of density character $\kappa$ for some uncountable cardinal $\kappa$; in that case all infinite dimensional members of $\C$ will necessarily be elementarily equivalent to $X_\kappa$.

A trivial example  is the class of Hilbert spaces. Indeed, up to linear isometry there is only one infinite dimensional Hilbert space of any given density character; moreover, ultrapowers of any Hilbert space are Hilbert spaces, and so are closed subspaces (hence also ultraroots).  

Although the structure of uncountably categorical Banach space structures has begun to be investigated \cite{SU-14}, very few examples are known. In addition to the class of Hilbert spaces, some less trivial examples consist of certain finite dimensional perturbations of Hilbert spaces: namely given a finite dimensional normed space $E$, the class $\mathcal C_E$ of all 2-direct sums $E\oplus_2 H$, where $H$ is any infinite dimensional Hilbert space, is $\kappa$-categorical for all infinite cardinal $\kappa$. This fact was known to the authors for a few years; its proof is included here (in Section \ref{sec:E+H}) for the sake of completeness and because it is closely tied to the other results that are expounded here. The main purpose of this article is, however, to give some less trivial examples, which include many spaces that are not linear-topologically isomorphic to a Hilbert space; one of them is even not linear-topologically embeddable in any Banach lattice with nontrivial concavity.  So from the point of view of Banach space geometry, they are not close to the class of Hilbert spaces.  On the other hand, in a certain asymptotic sense, which will be explained in Section 3, they have a very hilbertian character.  (In a sense that is not yet well understood, the main result of \cite{SU-14} says that an uncountably categorical Banach space must be very closely related to the class of Hilbert spaces.)  

Some other model-theoretic properties of our examples will be discussed in a future paper.  Here we concentrate on the methods needed to prove their categoricity,  using purely Banach space theoretic arguments which have their own interest, in particular in modular sequence space theory.

This paper is organized as follows: in Section \ref{sec:General} we describe a general pattern of the examples that we present, in the framework of {\it modular Banach spaces}. We give a criterion guaranteeing that the elementary class of a (separable, infinite dimensional) modular space $E$, considered as a Banach space with no extra structure, consists exactly of all the modular direct sums $E\oplus_m H$, where $H$ is a Hilbert space of arbitrary hilbertian dimension; this property clearly implies uncountable categoricity. All of our examples fit into this framework.  (The concepts of modular Banach space and modular direct sum are explained in Section \ref{sec:General}.  We note that every Banach space can be considered as a modular space by taking its modular function to be the square of the norm.)  We then make a brief presentation of the concrete examples whose properties are demonstrated later in the paper. All these examples are given as Nakano direct sums of a sequence of finite dimensional spaces. The Nakano (sequence) spaces appearing there are associated to a sequence of exponents converging to 2.  In Section \ref{sec:asympt-H-Nakano}, ultrapowers of these Nakano direct sums are described; it is proved that when the geometry of the sequence of finite dimensional spaces converges to the geometry of Hilbert space in a very weak sense, then the Nakano direct sum is 
\emph{asymptotically hilbertian}. 

The main content of this paper is in Sections \ref{sec:Nakano-direct-sums} and \ref{sec:2-direct-sums}.  There are two main families of examples discussed there: in the first one, which is treated in Section \ref{sec:Nakano-direct-sums}, all exponents of the underlying Nakano sequence space are distinct from 2; in the second one, treated in Section \ref{sec:2-direct-sums}, all the exponents are equal to 2. In both cases, the sequence of finite dimensional spaces used in the direct sum must converge in a suitable local sense to Hilbert space. There is more flexibility in the case of exponents different from 2: here the factor spaces may be even one-dimensional; in particular, the Nakano space itself is an example.  This space is not linear-topologically equivalent to Hilbert space if the convergence of the exponents to 2 is slow enough.

We finish with section \ref{sec:E+H}, which treats the finite dimensional perturbations of the class of Hilbert spaces already mentioned above, as an addendum to section \ref{sec:2-direct-sums}.

Throughout this paper, the Banach spaces considered may be either real or complex, and our proofs make no distinction between real and complex scalars.  We denote the scalar field by $\K$.  For general notions about ultrapowers the reader is referred to \cite[Section I]{H}.  (Our notation differs slightly from that of this author; \textit{e.g.}, we use $E_\U$, $[x_i]_\U$,  in place of $(E)_\U$, $(x_i)_\U$, etc).

\section{A general pattern}\label{sec:General}

The Banach space examples $E$ presented in this paper have the common feature that every ultrapower of $E$ is linearly isometric to a direct sum $E\oplus H$, where $H$ is a Hilbert space.  (It can be shown that this condition implies that $E$ is reflexive; we omit the argument since we have easier direct proofs of reflexivity for the examples presented here.)

Indeed, if $E$ is one of our examples, we show that for any ultrafilter $\U$ we have a direct sum decomposition 
\begin{equation}\label{eq:decomposition}
E_\U= D(E)\oplus H
\end{equation} 
where $D \colon E\to E_\U$ is the canonical embedding of $E$ into its ultrapower $E_\U$ (namely, $D$ assigns to each $x\in E$ the element represented in $E_\U:=E^I/\U$ by the constant family $(x)_{i\in I}$) and where $H$ is a closed linear subspace of $E_\U$ that is linearly isometric to a Hilbert space. 

Then $D(E)$ is complemented in $E_\U$ by the weak limit projection $P: [x_i]_\U\mapsto w\hbox{-}\lim\limits_{i,\U} x_i$ (which exists since $E$ is reflexive).  If $\ker P$ is linearly isometric to a Hilbert space (\textit{i.e.}, equation (\ref{eq:decomposition}) holds with $H=\ker P$) we say that $E$ is {\it asymptotically hilbertian (in the isometric sense)}. Indeed this is an isometric version of the notion of asymptotically hilbertian space considered, \textit{e.g.}, in \cite[pp. 220-221]{P89}, which corresponds to the case where $\ker P$ is only linear-topologically isomorphic to a Hilbert space. The examples that we present here are indeed asymptotically hilbertian (in the isometric sense), but the reader will observe that this property is not formally required by the main theoretical tool of the present section (Theorem \ref{Criterion:UC}).

To go further toward a similar description of all members of the elementary class of $E$, we need to make hypotheses on the nature of the direct sum. For example, if the direct sum in equation (\ref{eq:decomposition}) is always a 2-sum, that is, if
\[\|x+h\|^2=\|x\|_E^2+\|h\|_H^2\]
holds for all $x\in E$, $h\in H$, then for any Hilbert space $K$:
\[ (E\oplus_2 K)_\U=E_\U\oplus_2 K_\U= (E\oplus_2 H)\oplus_2 K_\U= E\oplus_2(H\oplus_2 K_\U)= E\oplus_2 K'\]
where $K'=H\oplus_2 K_\U$ is still a Hilbert space.  It follows that the class $\C$ of direct sums $E\oplus_2 K$, where $K$ is a Hilbert space, is closed under ultrapowers. Moreover, if $E$ is separable, then for every uncountable cardinal $\kappa$, the only member of $\mathcal C$ with density character $\kappa$ is $E\oplus_2 H_\kappa$, where $H_\kappa$ is the Hilbert space of hilbertian dimension $\kappa$. Therefore, if $\C$ turns out to be closed under ultraroots, it is necessarily uncountably categorical. 

Furthermore, there are other simple kinds of direct sums, apart from the 2-sums, for which the preceding reasoning is valid, namely the {\it modular} direct sums. A \emph{convex modular} on a linear space $X$ is a convex function $\Theta: E\to \mathbb R_+$ that satisfies the following conditions: $\Theta(0)=0$, $\Theta$ is symmetric ($\Theta(\lambda x)=\Theta(x)$ for any scalar $\lambda$ with $|\lambda|=1$), and $\Theta$ is faithful ($\Theta(x)=0\implies x=0$).  An associated norm on $X$ is then defined by the Luxemburg formula
\begin{equation}\label{eq:Lux-norm}
\|x\|=  \inf\{\lambda>0: \Theta(x/\lambda)\le 1\} \text{.}
\end{equation}
Since we require that the convex modular has finite values, the Luxemburg norm is implicitly defined by the equation
\begin{equation}\label{eq:Lux-norm-1}
\Theta\left(x\over \|x\|\right)=1 \text{.}
\end{equation}
The modular direct sum $X_1\oplus_m X_2$ of two modular spaces $(X_1,\Theta_1)$, $(X_2,\Theta_2)$ is their linear topological direct sum equipped with the modular
\[ \Theta(x_1+x_2)=\Theta_1(x_1)+\Theta_2(x_2)\quad x_1\in X_1, x_2\in X_2 \,\text{.}\]
Note that if norms $\|x\|_{X_i}$ on $X_i$ are given (for $i=1,2$), then convex modulars can be defined by $\Theta_i(x)=\|x\|_{X_i}^2$; moreover, the  Luxemburg norms associated to these modulars coincide with the given norms and the $m$-direct sum coincides with the $2$-direct sum. We shall systematically equip Hilbert spaces with their trivial modular $\Theta_H(x)=\|x\|^2$.

We say that a modular space $(X,\Theta)$ has a modular direct decomposition $X=Y\oplus_m Z$ if $Y$, $Z$ are linear subspaces of $X$ and for every $y\in Y,z\in Z$ we have $\Theta(y+z)=\Theta(y)+\Theta(z)$.

Given two modular spaces $(X_1,\Theta_1)$ and $(X_2,\Theta_2)$, we say that a linear map $T:X_1\to X_2$ is {\it modular preserving}, or {\it preserves modulars}, if $\Theta_2\circ T=\Theta_1$. Note that such a $T$ is necessarily isometric (for Luxemburg norms). Two modular spaces $(X_1,\Theta_1)$ and $(X_2,\Theta_2)$ are said to be {\it linearly isomodular} if there exists a modular preserving  surjective linear map from $X_1$ onto $X_2$. This implies that $X_1$ and $X_2$ are isometric, and the converse is trivially true in the special case where $\Theta_i(x)=\|x\|^2$, $i=1,2$. 

We say that a modular $\Theta$ satisfies the \emph{$\Delta_2$ condition with constant $C$} if we have 
\[
\Theta(2x)\le C \cdot \Theta(x) \ \ \ \ \ \mbox{for all $x\in X$.} 
\]
In this case the modular is bounded by $C^n$ on the ball $B(0, 2^n)$, and thus, by the general theory of convex functions, it is Lipschitz of constant at most $C^n$ on each ball $B(0, 2^n)$. One may thus define unambiguously a function $\Theta_\U$ on each ultrapower $X_\U$ by
\begin{align}\label{ultra-modular}
\Theta_\U([x_i]_\U)=[\Theta(x_i)]_\U \,\text{.}
\end{align}
It is immediate that $\Theta_\U$ is convex and symmetric. It is faithful since $\Theta_\U([x_i]_\U)=0$ means $\Theta(x_i)\to_{i,\U}0$ which implies $x_i\to_{i,\U} 0$ by equivalence of modular and norm convergence under the $\Delta_2$ condition. It is also easy to see using (\ref{eq:Lux-norm-1}) that the Luxemburg norm associated to $\Theta_\U$ coincides with the ultrapower norm on $X_\U$.

Our next result provides the main theoretical tool that we use in proving uncountable categoricity for the specific examples treated in the rest of the paper.

\begin{theorem}\label{Criterion:UC}
Let $E$ be an infinite dimensional $\Delta_2$ modular Banach space such that: 
\begin{itemize}
\item[\it i)] Every ultrapower of $E$ is linearly isomodular to $E\oplus_m H$ for some Hilbert space $H$.
\item[\it ii)] Every linear isometric operator $E\to E\oplus_m H$, where $H$ is any Hilbert space, maps $E$ onto $E$.
\end{itemize}
then the elementary class of $E$ consists exactly of all modular direct sums $E\oplus_m H$, where $H$ is an arbitrary Hilbert space, and hence this class is uncountably categorical (and $E$ is separable).

Furthermore, if $E_0$ is a finite dimensional $\Delta_2$ modular Banach space such that 
\begin{itemize}
\item[\it ii')] Any linear isometric operator $E_0\oplus_m \ell_2 \to E_0\oplus_m H$, where $H$ is any infinite dimensional Hilbert space, maps $E_0$ onto $E_0$
\end{itemize}
then the elementary class of $E=E_0\oplus_m \ell_2$ consists exactly of all modular direct sums $E_0\oplus_m H$, where $H$ is an arbitrary infinite dimensional Hilbert space, and hence this class is $\kappa$-categorical for every infinite cardinal $\kappa$.
\end{theorem}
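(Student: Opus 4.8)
The plan is to prove that the elementary class of $E$ is exactly
\[
\mathcal C=\{E\oplus_m H: H\text{ a Hilbert space}\},
\]
by verifying that $\mathcal C$ is closed under ultrapowers and under ultraroots, that $E$ is separable, and that $\mathcal C$ has exactly one member of each uncountable density character. The framework recalled in the Introduction then yields at once that $\mathcal C$ is uncountably categorical; moreover, closure under ultrapowers and ultraroots makes $\mathcal C$ closed under elementary equivalence, so it contains the elementary class of $E$, while categoricity forces all its (necessarily infinite dimensional) members to be elementarily equivalent to $E\in\mathcal C$, giving the reverse inclusion. Closure under ultrapowers is the computation of Section~\ref{sec:General} adapted to modular sums: since the $\Delta_2$ condition passes to $E\oplus_m H$, formula \eqref{ultra-modular} gives $(E\oplus_m H)_\U$ linearly isomodular to $E_\U\oplus_m H_\U$, and hypothesis \emph{i)} rewrites this as $E\oplus_m(H'\oplus_m H_\U)$ with $H'\oplus_m H_\U$ again Hilbert.

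Hypothesis \emph{ii)} is a rigidity statement that I would exploit as follows: in any space linearly isomodular to $E\oplus_m H$, the first factor $E^{*}$ is the \emph{unique} isometric copy of $E$ (any linear isometry of $E$ into it has range $E^{*}$), and consequently any isomodular isomorphism $E\oplus_m H_1\to E\oplus_m H_2$ splits as an isomodular automorphism of $E$ together with a Hilbert-space isometry $H_1\to H_2$. I would also record that, by the reflexivity remark of Section~\ref{sec:General}, $E$ is reflexive, so $E_\U\cong_m E\oplus_m H$ is reflexive as a modular sum of reflexive spaces; hence $E$ is super-reflexive, every member of $\mathcal C$ and each of its ultrapowers is reflexive, and the weak-limit projections below exist.

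The crux is closure under ultraroots. Let $X$ be an ultraroot of a member of $\mathcal C$; fix $\U$ and an isometry identifying $W:=X_\U$ with $E\oplus_m H$, and pull back the modular, so that $W$ is isomodular to $E\oplus_m H$ with first factor $E^{*}$ the unique isometric copy of $E$ in $W$. Since $X$ is reflexive, the weak-limit projection $P\colon W\to D:=D_X(X)$ is a norm-one projection with $W=D\oplus\ker P$. It suffices to show $E^{*}\subseteq D$: granting this, each $d\in D$ splits as $d=\pi_E(d)+\pi_H(d)$ with $\pi_E(d)\in E^{*}\subseteq D$, whence $\pi_H(d)\in D\cap H$, so $D=E^{*}\oplus_m(D\cap H)$ with $D\cap H$ a Hilbert subspace of $H$; as $X\cong_m D$ this gives $X\cong_m E\oplus_m(D\cap H)\in\mathcal C$. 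To obtain $E^{*}\subseteq D$, take $e^{*}\in E^{*}$ and write $e^{*}=Pe^{*}+k$ with $k:=(I-P)e^{*}\in\ker P$. \emph{If} $k\in H$, then $Pe^{*}=e^{*}-k$ has $E$-component $e^{*}$ and $H$-component $-k$, so by \eqref{eq:Lux-norm-1} and the modular-sum identity $\|Pe^{*}\|=\lambda$ satisfies $\Theta_E(e^{*}/\lambda)+\|k\|^{2}/\lambda^{2}=1$, while $\|e^{*}\|=\mu$ satisfies $\Theta_E(e^{*}/\mu)=1$; since $t\mapsto\Theta_E(e^{*}/t)$ is strictly decreasing (by convexity and faithfulness), $k\neq 0$ would force $\lambda>\mu$, i.e. $\|Pe^{*}\|>\|e^{*}\|$, contradicting $\|P\|=1$. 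Hence $k=0$ and $e^{*}=Pe^{*}\in D$. Everything therefore reduces to the inclusion $\ker P\subseteq H$ (equivalently, to showing $P|_{E^{*}}$ isometric, after which uniqueness of the copy of $E$ from \emph{ii)} gives $P(E^{*})=E^{*}\subseteq D$). This is the step I expect to be the main obstacle: it is exactly where the intrinsic weak-topological decomposition coming from reflexivity must be matched with the isometric modular decomposition supplied by \emph{i)}, and I would attack it through the weak lower semicontinuity of the convex modular together with super-reflexivity (uniform convexity of an equivalent modular) and the rigidity from \emph{ii)}, which pins down $E^{*}$ and forbids any weakly null ($\ker P$) contribution to it.

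Finally I would harvest the consequences. Separability of $E$ follows from ultraroot closure: by downward L\"owenheim--Skolem the elementary class has a separable infinite dimensional member $E'$, and since $E'\equiv E$ we get $E'_\U\cong_m E\oplus_m H$, so ultraroot closure gives $E'\cong_m E\oplus_m H''$; as $E'$ is separable, so is its subspace $E$. With $E$ separable, a member $E\oplus_m H$ has density character $\max(\aleph_0,\dim H)$, so for each uncountable $\kappa$ the unique member of density character $\kappa$ is $E\oplus_m H_\kappa$, all Hilbert spaces of dimension $\kappa$ being isometric; this is the required uniqueness. For the finite dimensional core statement, note that hypothesis \emph{i)} is automatic for $E=E_0\oplus_m\ell_2$: since $E_0$ is finite dimensional, $(E_0\oplus_m H)_\U=E_0\oplus_m H_\U$ is again of the required form. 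Replacing \emph{ii)} by \emph{ii'}) and keeping the Hilbert summand infinite dimensional throughout, the same scheme applies, with the simplification that the finite dimensional summand $E_0$ is easy to locate inside an ultraroot (so the obstacle above is milder); one obtains that the elementary class of $E$ is $\{E_0\oplus_m H:\dim H\geq\aleph_0\}$, whose unique separable member is $E_0\oplus_m\ell_2=E$, hence $\kappa$-categoricity for every infinite $\kappa$.
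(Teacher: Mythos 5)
Your overall scaffolding (closure of $\mathcal C=\{E\oplus_m H\}$ under ultrapowers, uniqueness of the member of each uncountable density character once $E$ is shown separable, the L\"owenheim--Skolem argument for separability, and the reduction of the finite dimensional case to (ii') with the observation that $(E_0\oplus_m H)_\U=E_0\oplus_m H_\U$) matches the paper and is fine. The problem is that the crux --- closure under ultraroots --- is not actually proved. You reduce everything to the inclusion $\ker P\subseteq H$, where $P$ is the weak-limit projection of $W=X_\U$ onto the diagonal copy of $X$ and $H$ is the Hilbert factor of the modular decomposition of $W$, and you explicitly flag this as ``the main obstacle'' to be attacked later by weak lower semicontinuity and super-reflexivity. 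That step is the entire content of the theorem at this point, and there is no reason to expect it to hold: the paper's own remark immediately after the statement warns that the Hilbert summand $H$ supplied by hypothesis (i) ``has no clear reason to be the kernel of the weak limit projection.'' The two decompositions of $W$ --- the weak-topological one $D\oplus\ker P$ and the isometric-modular one $E^{*}\oplus_m H$ --- are a priori unrelated, and hypothesis (ii) by itself only locates copies of $E$, not of $\ker P$. You also lean on reflexivity of $E$, which the paper states as provable but deliberately does not prove in this generality.

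The paper closes the gap by an entirely different device that avoids the weak-limit projection. Writing $i=J^{-1}D_X\colon X\to E\oplus_m H$, one takes a \emph{second} ultrapower and considers $i_\U\colon X_\U\to(E\oplus_m H)_\U=D(E)\oplus_m K$, where $D$ is the diagonal embedding of $E\oplus_m H$; hypothesis (ii) is applied twice, once to $D|_E$ and once to $S=i_\U J|_E$, to conclude that both are isometries of $E$ \emph{onto} the same subspace $D(E)$. The elementary observation that $i_\U(X_\U)\cap D(E)\subseteq Di(X)$ (because $i(X)$ is norm-closed) then forces $E\subseteq i(X)$, after which restricting the factor projection $\pi\colon E\oplus_m H\to E$ to $i(X)$ exhibits $i(X)=E\oplus_m H_0$ with $H_0\subseteq H$. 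If you want to salvage your write-up, you should replace the $\ker P\subseteq H$ step by this double-ultrapower argument (or supply a genuine proof of your claimed inclusion, which I do not believe follows from the stated hypotheses).
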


\begin{remark}
 Condition $(i)$ implies that every ultrapower $E_\U$ has a modular direct sum decomposition $E_\U=E_1\oplus_m H$, where $E_1\subset E_\U$ is isomodular with $E$ and $H$ with an Hilbert space. Condition $(ii)$ applied to the diagonal embedding $D: E\to E_\U$ implies that $E_1=D(E)$, thus $E$ verifies eq. (\ref{eq:decomposition}).  However $H$ has no clear reason to be the kernel of the weak limit projection, and so $E$ may not be asymptotially hilbertian.
\end{remark}
\begin{proof}[Proof of Theorem \ref{Criterion:UC}]
Assume that $E$ is infinite dimensional and let $\mathcal C$ be the class of all Banach spaces that are linearly isometric to $E\oplus_m K$, for some Hilbert space $K$. This class is closed under ultrapowers since 
\[ (E\oplus_m K)_\U= E_\U\oplus_m K_\U \hbox{ is isomodular with } (E\oplus_m H)\oplus_m K_\U=E\oplus_m K'\]
where $K'=\H\oplus_m H_\U$ is also a Hilbert space. On the other hand, for every uncountable cardinal $\kappa$ strictly bigger than the density character of $E$, the only member (up to linear isometry) of the class $\mathcal C$ having density character $\kappa$ is $E\oplus_m H_\kappa$, where $H_\kappa$ is the Hilbert space of hilbertian dimension $\kappa$. To complete the proof of the Theorem it remains only to prove that the class $\mathcal C$ is also closed under ultraroots. Once this is proved, it follows that $E$ must be separable, since the elementary class of $E$ contains a separable space.

Let $X$ be  a Banach space such that some ultrapower $X_\U$ of $X$ is linearly isometric to a space $E\oplus_m H$. Let $J \colon  E \oplus_m H\rightarrow X_\U$ be such an isometry, $D_X \colon  X\hookrightarrow X_\U$ be the diagonal embedding and $i=J^{-1}D_X$ be the resulting embedding of $X$ into $E\oplus_m H$. Taking ultrapowers, we get an embedding $i_\U \colon  X_\U\hookrightarrow (E\oplus_mH)_\U$. By the preceding argument we have a modular direct decomposition $(E\oplus_m H)_\U= E_1\oplus_m K$, where $E_1$ is isometric (in fact, isomodular) to $E$ and $K$ is an Hilbert space. Let $D \colon E\oplus_m H\hookrightarrow (E\oplus_m H)_\U$ be the diagonal embedding. Using assumption (ii) we have that $D|E$ is an isometry from $E$ onto $E_1$, in particular $E_1=D(E)$. We set $S=i_\U J$; this is an isometric embedding $E\oplus_m H\hookrightarrow (E\oplus_m H)_\U$ which need not coincide with the diagonal embedding $D$. However, using assumption (ii) again, we do have that $S|E$ is an isometry from $E$ onto $D(E)$. To summarize, we have the following commutative diagram of linear isometries:

\[\xymatrix{&X_\U\ar^{i_\U}[dr]&\\X\ar^{D_X}[ur]\ar_i[r]\ar_i[rd]& E\oplus_m H \ar_J[u]\ar_S[r]&(E\oplus_m H)_\U&\hskip-1cm= D(E)\oplus_m K\\&E\oplus_m H\ar_{D}[ru]&}\] 

\medskip
Note that $i_\U(X_\U)\cap D(E)\subset Di(X)$. Indeed if $\xi=[x_i]_\U \in X_\U$ and $z\in E$ are such that  $i_\U([x_i]_\U)=[i(x_i)]_\U=D(z)$ then $i(x_i)\To\limits_\U z$.  Since $i(X)$ is a closed subspace of $E\oplus_m H$ this implies that $z\in i(X)$.

For every $x\in E$ we have $i_\U J(x)=S(x)\in D(E)$; hence by the preceding argument there is $x'\in X$ such that $S(x)=Di(x')$. As $S|E: E\to D(E)$ is surjective this shows that $D(E)\subset Di(X)$ and thus $E\subset i(X)$.

Let $\pi \colon  E\oplus_m H\to E$ be the first projection on the direct sum $E\oplus_m H$. By the preceding argument, the range of $\pi$ is contained in $i(X)$.  Let $\pi'$ be the restriction of $\pi$ to $i(X)$; since $E\subset i(X)$, the range of $\pi'$ is $E$. Its kernel $H_0$ is contained in that of $\pi$, and thus $H_0\subset H$ is a Hilbert space. Finally $i(X)=E\oplus H_0$, with the norm induced by that of $E\oplus_m H$. That is, $i(X)=E\oplus_m H_0$. 

Now suppose $E_0$ is a finite dimensional $\Delta_2$ modular Banach space that satisfies (ii).  Let $\mathcal C$ be the class of all Banach spaces that are linearly isometric to $E_0\oplus_m K$, for some Hilbert space $K$. Since $(E_0)_\U = E_0$ for any ultrafilter $\U$, the reasoning above shows that $\mathcal C$ is closed under ultrapowers and ultraroots.  
However, the members of this class are not all mutually elementarily equivalent, since any finite dimensional member $X$ of $\mathcal C$ has trivial ultrapowers ($X_\U=X$), and thus cannot be elementarily equivalent to an infinite dimensional one. The subclass $\mathcal C_\infty$ consisting of all the infinite dimensional members of $\mathcal C$ is also closed under ultrapowers and ultraroots, and has a unique member of any density character (up to linear isometry). Thus $\C_\infty $ is the elementary class of its unique separable member $E=E_0\oplus_m \ell_2$.   

It is routine to verify that the previous argument  for proving closedness by ultraroots also works for $\C_\infty$ under hypothesis (ii'), which is formally weaker than (ii).
\end{proof}

We  now introduce the examples that are treated in the next sections and summarize their connections to the hypotheses of the preceding Theorem; proofs of what we state here are given in Sections \ref{sec:asympt-H-Nakano}, \ref{sec:Nakano-direct-sums}, and \ref{sec:2-direct-sums}.

Typical examples of modular spaces satisfying condition (i)  of  Theorem \ref{Criterion:UC} are the Nakano sequence spaces $\ell_{(p_n)}$ associated to a sequence of exponents $(p_n)$ converging to 2. The space $N:=\ell_{(p_n)}$ is the linear space of sequences of (real or complex) scalars $x=(x(n))$ such that  
\[ \Theta(x):=\sum_{n=1}^\infty |x(n)|^{p_n}<\infty\]
and $\Theta$ is a natural convex modular on $N$ which verifies condition $\Delta_2$ (since the sequence $(p_n)$ is bounded). The Luxemburg norm on $N$ is then defined by (\ref{eq:Lux-norm}), and $N$ is complete for this norm. Since $|x|\le|y|$ clearly implies $\Theta(x)\le\Theta(y)$, it implies also $\|x\|\le\|y\|$; \textit{i.e.}, $N$ is a Banach lattice (for the Luxemburg norm). It is easy to see that if $(x_n)$ is a decreasing sequence of elements of $N$ which converges to zero coordinatewise then $\Theta(x_n)$ converges to zero and so does $\|x_n\|$; hence $N$ is order-continuous.

If $p_n\ne 2$ for all $n$, then $N$  also satisfies condition (ii) of Theorem \ref{Criterion:UC} (if $p_n$ is allowed to equal 2, this condition will only be true for the subspace $N_0=\clspan\,[e_n: p_n\ne 2]$). The space $N$ is linear-topologically isomorphic to $\ell_2$ if a certain summation condition due to Nakano holds (see Fact \ref{cond:N=ell 2} below). On the other hand, it is possible to choose the exponents to yield Nakano spaces that satisfy both conditions of Theorem \ref{Criterion:UC} but are not linear-topologically isomorphic to a Hilbert space, and they provide new kinds of examples of uncountable categoricity. 

A bigger variety of examples appears when we consider vector-valued Nakano sequence spaces, that is, direct sums of a sequence of finite dimensional Banach spaces. The elements of this direct sum are sequences of vectors, the norms of which form a sequence belonging to a given Nakano space. We denote by $(\mathop{\oplus}\limits_n E_n)_N$ the Nakano direct sum associated to the family of spaces $(E_n)$ and the Nakano space $N$. Thus
\[\big(\mathop{\oplus}\limits_n E_n\big)_N=\{(x(n))\in \prod_n E_n: (\|x(n)\|_{E_n})\in N\} \text{.}\]
A convex modular and norm are defined on the vector-valued Nakano sequence space by taking the Nakano modular of the sequence of norms:
\[ \Theta(x):=\sum_{n=1}^\infty \|x(n)\|_{E_n}^{p_n}\]
and then using the associated Luxemburg norm. We show that the Nakano direct sum  $(\mathop{\oplus}\limits_n E_n)_N$ satisfies condition (i) of  Theorem \ref{Criterion:UC} whenever the Jordan-von-Neumann constants (defined below, at the beginning of Section \ref{sec:asympt-H-Nakano}) of the spaces $E_n$ converge to 1 (here $p_n$ may take the value 2). These constants measure the degree of approximation to which the spaces satisfy the parallelogram inequality. It is equivalent to require that the Banach-Mazur distances from 2-dimensional subspaces of $E_n$ to the 2-dimensional Hilbert space converge uniformly to 1. (Note that this is a far weaker condition than saying that the Banach-Mazur distances from $E_n$ to the Hilbert space of the same dimension converge to 1.)

On the other hand, provided $p_n\ne 2$ for all $n$, the Nakano direct sum  $(\mathop{\oplus}\limits_n E_n)_N$ satisfies condition (ii) of  Theorem \ref{Criterion:UC} without any condition on the spaces $E_n$ except that they are finite dimensional. The reason is that isometries distinguish the spaces $E_n$ (or $H$) by the value of the corresponding exponent $p_n$ (resp. 2 for $H$). 

In the opposite case where $p_n$ is constantly $2$, in which case $N=\ell_2$ and the $N$-direct sum is a 2-sum, the preceding argument does not work, and isometries recognize the $E_n$ spaces rather by their geometric properties. For this reason the conditions on the spaces $E_n$  that we assume in this case are far more restrictive than in the preceding case (the examples are essentially the $\ell^p_n$ spaces or their non-commutative analogues, the Schatten classes $S^p_n$).

\section{Asymptotically hilbertian Nakano direct sums}\label{sec:asympt-H-Nakano}

\noindent For a normed space $X$, its Jordan-von Neumann constant $a(X)$ is defined by
\begin{align}\label{def:a(X)}
a(X)=\frac 12 \sup \{ \|x+y\|^2+\|x-y\|^2: x,y\in X, \|x\|^2+\|y\|^2=1\} \text{.}
\end{align}
By setting $u=x+y$, $v=x-y$ it is immediate that we also have
\begin{align}\label{eq:a(X)}
a(X)=2 \sup \{\|x\|^2+\|y\|^2: x,y\in X,  \|x+y\|^2+\|x-y\|^2 =1\} \text{.}
\end{align}
It follows that $a(X)\ge 1$, and that $a(X)=1$ iff $X$ is linearly isometric to a Hilbert space \cite{JvN}.
Note that $a(X)$ is the norm of the operator 
\[M_X: \ell_2^2(X)\to \ell_2^2(X): (x,y)\mapsto \frac 1{\sqrt 2}(x+y,x-y) \,\text{.}\]
The conjugate operator is easily seen to be $M_{X^*}$, so that $a(X^*)=a(X)$.

\smallskip
Let $N$ be a Nakano sequence space with exponent sequence $(p_n)$ converging to 2.

\noindent Let $(e_n)$ be the sequence of units of $N$ ($e_n$ is the sequence $(\delta_{kn})_{k\in\mathbb N}$). For every $n\in\mathbb N$ and $x\in N$ let $P_n(x)=\sum_{k=1}^n x(k)e_k$.  Clearly $P_n$ is a projection of norm one on $N$. On the other hand $\Theta(x-P_n(x))=\sum_{k=n+1}^\infty |x(k)|^{p_k}\to 0$ when $n\to\infty$, which by the $\Delta_2$-condition for $\Theta$ implies that $\|x-P_n(x))\|\to 0$. It follows that $(e_n)$ is a Schauder basis for $N$ and the generic element of $N$ can be written $x=\sum_{n=1}^\infty x(n)e_n$. (This basis is clearly unconditional with constant 1; in fact, it consists of mutually disjoint atoms of the Banach lattice $N$). 

Let $E=\left( \mathop{\oplus}\limits_n E_n\right)_N$ be the $N$-direct sum of a family $(E_n)$ of Banach spaces and $\Theta$ its modular, as defined in section \ref{sec:General}.
Note that since $(p_n)$ is bounded, the modular $\Theta$ satisfies a $\Delta_2$ condition.
If we set $\nu(x)=(\|x(n)\|_{E_n})$ we have clearly $\Theta_E(x)=\Theta_N(\nu(x))$ and $\|x\|_E=\|\nu(x)\|_N$. The map $\nu: X\to N$ is clearly 1-Lipschitz.

If $(e_n)$ is the natural basis of the Nakano space (as above),  it can be useful to denote the generic element $x=(x_n)$ of $E$ by $\sum_n e_n\otimes x(n)$. Setting $P_n(x)= \sum_{k=1}^n e_k\otimes x(k)$, the map $P_n$ is a linear projection on $E$ of norm one and for every $x\in E$
\[\| x-P_n(x)\|_E= \|\nu(x)-P_n(\nu(x))\|_N\to 0\]
when $n\to \infty$. 

Since the Banach lattice $N$ is order continuous, its dual space $N^*$ is also a Banach sequence space. It is well known that the dual space $N^*$ to  the $N$-direct sum $E=(\mathop{\oplus}\limits_n E_n)_N$ is then $E^*=(\mathop{\oplus}\limits_n E_n^*)_{N^*}$. If moreover $(p_n)$ is bounded away from 1,  then $N^*$ is the Nakano sequence space with the conjugate exponent sequence $(p_n^*)$, with an equivalent norm, and since the spaces $E_n$ are finite dimensional we have  that $E^{**}=(\mathop{\oplus}\limits_n E_n^{**})_{N}=(\mathop{\oplus}\limits_n E_n)_{N}=E$  (with the same norm). Thus in this case $E$ is reflexive. This remains true if a finite number of the $p_n$ equal 1 and the remainder of the exponents are bounded away from 1 (\textit{e.g.}, when the sequence $(p_n)$ converges to 2).

\begin{proposition}\label{ultrapower-N}
If the Nakano sequence space $N$ has its exponent sequence converging to 2, and if the linear spaces $E_n$ are finite dimensional, then every ultrapower of their Nakano direct sum $E=\big(\mathop{\oplus}\limits_n E_n\big)_N$  has the modular decomposition $E_\U=E\oplus_m \H$, where  $E$ is the diagonal copy of $E$ in $E_\U$ and $\H$ is the kernel of the weak limit projection $P:E_\U\to E$. Moreover
\[\Theta_\U(x+h)=\Theta(x)+\|h\|^2\]
for every $x\in E,h\in \H$.
If, moreover, the Jordan-von~Neumann constants $a(E_n)$ converge to 1, then the space $\H$ is linearly isometric to a Hilbert space, and $E=\big(\mathop{\oplus}\limits_n E_n\big)_N$ is asymptotically hilbertian.
\end{proposition}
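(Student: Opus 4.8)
The plan is to build the decomposition in three stages: a purely linear splitting from reflexivity, its promotion to a modular ($\oplus_m$) splitting together with the formula $\Theta_\U(x+h)=\Theta(x)+\|h\|^2$, and finally the identification of $\H$ as a Hilbert space under the hypothesis $a(E_n)\to1$. Since $E$ is reflexive, every $\U$-bounded family has a weak limit along $\U$, so $P\colon E_\U\to E$ defined by $P([x_i]_\U)=w\text{-}\lim_{i,\U}x_i$ is a well-defined norm-one projection onto the diagonal copy $D(E)$, giving the linear decomposition $E_\U=D(E)\oplus\H$ with $\H=\ker P$. The fact I would record next, and use repeatedly, is a coordinatewise description of $\H$: because each coordinate evaluation $x\mapsto x(n)$ is weakly continuous into the finite-dimensional space $E_n$, an element $h=[y_i]_\U$ lies in $\H$ if and only if $\|y_i(n)\|_{E_n}\to_{i,\U}0$ for every fixed $n$. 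Together with the $\Delta_2$ condition (which turns coordinatewise decay on the finitely many first coordinates into norm decay) this yields the truncation principle $[P_{N_0}y_i]_\U=0$ for $h\in\H$. Hence, for any prescribed $N_0$, every element of $\H$ has a representative supported on the coordinates $>N_0$; since $(p_n)\to2$, I can always push the support of an $\H$-representative into a zone where $p_n$ is as close to $2$ as I wish.

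Second, I would prove the modular formula. The equality $\Theta_\U(h)=\|h\|^2$ on $\H$ reduces, via the Luxemburg relation $\Theta_\U(h/\|h\|)=1$, to showing that $\Theta_\U$ is \emph{$2$-homogeneous} on $\H$ (it then suffices to know it on the unit sphere, where it equals $1$). This $2$-homogeneity is exactly where $p_n\to2$ enters: representing $h$ on coordinates $>N_0$ and comparing $|\lambda|^{p_n}$ with $|\lambda|^2$ gives $|\Theta_\U(\lambda h)-|\lambda|^2\Theta_\U(h)|\le C(\lambda)\,(\sup_{n>N_0}|p_n-2|)\,\Theta_\U(h)$, and letting $N_0\to\infty$ forces $\Theta_\U(\lambda h)=|\lambda|^2\Theta_\U(h)$. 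For the additivity $\Theta_\U(Dx+h)=\Theta(x)+\Theta_\U(h)$ I would split $x=P_{N_0}x+(I-P_{N_0})x$ and use that $P_{N_0}x$ and $(I-P_{N_0})(x+y_i)$ have disjoint coordinate supports, so the first $N_0$ coordinates contribute exactly $\sum_{n\le N_0}\|x(n)\|^{p_n}$; the tail cross-terms $\bigl|\,\|x(n)+y_i(n)\|^{p_n}-\|y_i(n)\|^{p_n}\bigr|$ are then dominated, via the mean-value and Young inequalities, by $\delta\sum_{n>N_0}(\|x(n)\|+\|y_i(n)\|)^{p_n}+C_\delta\sum_{n>N_0}\|x(n)\|^{p_n}$, the first sum being uniformly modular-bounded and the second being the (small) tail of the fixed vector $x$. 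Sending $N_0\to\infty$ and then $\delta\to0$ kills the cross-terms and gives $\Theta_\U(Dx+h)=\Theta(x)+\|h\|^2$; in particular $E_\U=D(E)\oplus_m\H$.

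Finally, assuming $a(E_n)\to1$, I would identify $\H$ with a Hilbert space by verifying the parallelogram law for its norm. Since $\Theta_\U=\|\cdot\|^2$ on the subspace $\H$, this is equivalent to the modular identity $\Theta_\U(h+k)+\Theta_\U(h-k)=2\Theta_\U(h)+2\Theta_\U(k)$ for $h,k\in\H$. Choosing $N_0$ so large that both $|p_n-2|<\eps$ and $a(E_n)<1+\eps$ for $n>N_0$, and representing $h,k$ on coordinates $>N_0$, the identity becomes a coordinatewise comparison on the tail of $A_n:=\|u_n+v_n\|^{p_n}+\|u_n-v_n\|^{p_n}$ against $B_n:=2\|u_n\|^{p_n}+2\|v_n\|^{p_n}$, where $u_n=y_i(n)$ and $v_n=z_i(n)$. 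The Jordan--von Neumann definition supplies the square-exponent bound $\tfrac{2}{a(E_n)}(\|u_n\|^2+\|v_n\|^2)\le\|u_n+v_n\|^2+\|u_n-v_n\|^2\le2a(E_n)(\|u_n\|^2+\|v_n\|^2)$, and interpolating between the $\ell_{p_n}^2$ and $\ell_2^2$ norms (which differ by a factor $2^{|p_n/2-1|}\to1$) converts this into $c(\eps)B_n\le A_n\le C(\eps)B_n$ with $c(\eps),C(\eps)\to1$ as $\eps\to0$. Summing over $n>N_0$ and passing to the $\U$-limit gives $c(\eps)(2\|h\|^2+2\|k\|^2)\le\|h+k\|^2+\|h-k\|^2\le C(\eps)(2\|h\|^2+2\|k\|^2)$, and letting $N_0\to\infty$ squeezes this to the parallelogram law; by the Jordan--von Neumann criterion $a(\H)=1$, so $\H$ is linearly isometric to a Hilbert space and $E$ is asymptotically hilbertian.

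The main obstacle I anticipate is this last term-by-term comparison, where the two independent approximations---$p_n\to2$ (the wrong exponent in the modular) and $a(E_n)\to1$ (only approximate parallelogram)---must be controlled \emph{simultaneously and uniformly on the tail}, with multiplicative rather than additive errors, so that they survive the infinite summation and the ultralimit. The disjoint-support trick and the truncation principle are what make both the modular additivity and this comparison tractable, by confining all the relevant mass to a tail on which $p_n$ and $a(E_n)$ are uniformly near their limiting values $2$ and $1$.
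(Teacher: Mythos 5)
Your proposal is correct and follows essentially the same route as the paper's proof: the weak-limit projection from reflexivity, truncation of $\H$-representatives to coordinate tails, $2$-homogeneity of $\Theta_\U$ on $\H$ from $p_n\to 2$, additivity via disjoint supports, and the tail-wise Jordan--von Neumann comparison for the parallelogram identity. The only (immaterial) deviations are that the paper gets additivity by proving it exactly for finitely supported $x$ and then invoking density plus the $\Delta_2$-Lipschitz continuity of $\Theta_\U$, rather than your direct cross-term estimate, and it settles for the one-sided parallelogram \emph{inequality} (which already characterizes Hilbert space) instead of your two-sided squeeze.
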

\begin{proof} 1) {\it The modular decomposition of $E_\U$}.
As explained at the beginning of section \ref{sec:General}, the canonical image $D(E)$ of $E$ in $E_\U$ is the range of the weak limit projection $P$. Let $\H=\ker P$; we prove first that the direct sum $E_\U=E\oplus \H$ is modular.

Note that if $\xi\in \H$, then for every bounded family $(x_i)$ in $E$ representing $\xi$ and every $n\in \mathbb N$ we have
\[w-\lim_{i,\U} x_i(n)=0 \,\text{.}\]
If $x\in E$ has finite support relative to $N$, that is $n_x=\sup\{n: x(n)\ne 0\}<\infty$, and $\xi\in\H$, we can find a representing family $(x_i)$ for $\xi$ with $x_i(n)=0$ for every $n\le n_x$ and $i$. Then $\Theta(x+x_i)=\Theta(x)+\Theta(x_i)$, which shows that
\[\Theta_\U(x+\xi)= \Theta(x)+\Theta_\U(\xi) \,\text{.} \]
This equality extends to every $x\in E$, by density of finitely supported elements in $E$ and continuity of $\Theta_\U$. 

\smallskip
2) {\it $\Theta_\U(\xi)=\|\xi\|^2$ whenever $\xi\in\H$}. It suffices to prove that the modular restricted to $\H$ is 2-homogeneous.  Given $\xi\in \H$ and $n\in \mathbb N$, we can choose a family $(x_i)$ in $E$, representing $\xi$ and such that $ x_i(k)=0$ for $k=1\dots n$ and all $i\in I$. Then for every $\lambda\in \K$ and $i \in I$
\begin{align*}
 \left |\Theta(\lambda x_i)-|\lambda|^2\Theta(x_i) \right| &= \left |\sum_{k=n}^\infty \left \|\lambda x_i(k)\right\|^{p_k} - |\lambda|^2 \sum_{k=n}^\infty \left \| x_i(k) \right\|^{p_k}\right|\\
 & \le  \sum_{k=n}^\infty \left | \left |\lambda\right |^{p_k} - |\lambda|^2 \right | \left \| x_i(k) \right\|^{p_k} \le \max_{k\ge n}  \left | \left |\lambda\right |^{p_k} - |\lambda|^2 \right | \Theta(x_i) \,\text{.}
\end{align*}
It follows that
\[  \left |\Theta_\U(\lambda \xi)-|\lambda|^2\Theta_\U(\xi) \right| \le \max_{k\ge n}  \left | \left |\lambda\right |^{p_k} - |\lambda|^2 \right | \Theta_\U(\xi) \,\text{.}\]
Then since $p_n\to 2$, by letting $n\to\infty$  we obtain $\Theta_\U(\lambda \xi)=|\lambda|^2\Theta_\U(\xi)$.

\smallskip
3) {\it $\H$ is linearly isometric to a Hilbert space}.
If $x,y\in E_n$ and $p_n\le 2$ we have
\begin{align*}
{\|x+y\|^{p_n}+\|x-y\|^{p_n}\over 2} &\le \left({\|x+y\|^2+\|x-y\|^2\over 2}\right)^{p_n/2}\le a(E_n)^{p_n/2}\left(\|x\|^2+\|y\|^2\right)^{p_n/2}\\
&\le a(E_n)^{p_n/2} \left(\|x\|^{p_n}+\|y\|^{p_n}\right) \,\text{.}
\end{align*}
If $p_n\ge 2$ the inequalities are reversed
\[{\|x+y\|^{p_n}+\|x-y\|^{p_n}\over 2} \ge a(E_n)^{-p_n/2} (\|x\|^{p_n}+\|y\|^{p_n}) \,\text{.} \]
Setting $u=x+y$ and $v=x-y$ we obtain in this case
\[{\|u\|^{p_n}+\|v\|^{p_n}\over 2}\ge a(E_n)^{-p_n/2}\left(\bigg\|{u+v\over 2}\bigg\|^{p_n}+\bigg\|{u-v\over 2}\bigg\|^{p_n}\right)\]
and relabelling the variables and rearranging the preceding inequality
\[{\|x+y\|^{p_n}+\|x-y\|^{p_n}\over 2} \le 2^{p_n-2}a(E_n)^{p_n/2} \left(\|x\|^{p_n}+\|y\|^{p_n}\right) \text{.} \]
Set $\alpha_n=a(E_n)^{p_n/2}\max(1,2^{p_n-2})$; we then have $\alpha_n\to 1$ when $n\to \infty$.

Now assume that $x,y\in E$ with $x(k)=0=y(k)$ whenever $k<n$. We then have
\[{\Theta(x+y)+\Theta(x-y)\over 2}\le \beta_n (\Theta(x)+\Theta(y))\]
where $\beta_n=\sup\{\alpha_k: k\ge n\}$. Observe that $\beta_n\to 1$ when $n\to\infty$.

Passing to the ultrapower, consider $\xi,\eta\in E_\U$ that are represented by families $(x_i)$ and $(y_i)$
respectively, with $x_i(k)=0=y_i(k)$ for all $k<n$ and $i \in I$.
By the previous argument we have 
\[{\Theta_\U(\xi+\eta)+\Theta_\U(\xi-\eta)\over 2}\le \beta_n (\Theta_\U(\xi)+\Theta_\U(\eta)) \,\text{.}\]
When $\xi,\eta\in \H$, the preceding inequality is valid for all $n\in\mathbb N$, hence 
\[{\Theta_\U(\xi+\eta)+\Theta_\U(\xi-\eta)\over 2}\le \Theta_\U(\xi)+\Theta_\U(\eta) \,\text{.} \]
Since $\Theta_\U(\xi)=\|\xi\|^2$ whenever $\xi\in\H$, the Banach space $\H$ satisfies the parallelogram inequality and thus is linearly isometric to a Hilbert space.
\end{proof}

\begin{remark}
Proposition \ref{ultrapower-N} suggests that Nakano spaces like $N$ (as in Corollary \ref{cor:Nakano sum categoricity})  are ``close to being hilbertian''. However they need not be linearly isomorphic to a Hilbert space. Indeed, from a result of Nakano himself \cite{N} it is easy to deduce the following fact:
\begin{fact}\label{cond:N=ell 2}
The Nakano space $N=\ell_{(p_n)}$ has an equivalent hilbertian norm iff for some $c>0$ the series $\sum_{n=1}^\infty c^{\frac{2p_n}{|p_n-2|}}$ is convergent.
\end{fact}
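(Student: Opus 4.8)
The plan is to recognize that ``having an equivalent hilbertian norm'' is the same as being linearly isomorphic to $\ell_2$, and then to reduce the isomorphism question to a summability condition on the exponents, which is precisely the content of Nakano's theorem in \cite{N}. First I would reduce to the natural basis. If $N$ carries an equivalent hilbertian norm, then $N$ with that norm is a separable infinite-dimensional Hilbert space, hence linearly isomorphic to $\ell_2$; the converse is trivial. Now $(e_n)$ is a normalized $1$-unconditional basis of $N$ (indeed $\|e_n\|=1$, since $\Theta(e_n)=1$). By the classical uniqueness, up to equivalence and permutation, of the unconditional basis of $\ell_2$ (K\"othe, Lindenstrauss--Pe\l czy\'nski), any isomorphism $N\cong\ell_2$ forces $(e_n)$ to be equivalent to the unit vector basis of $\ell_2$. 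Thus $N$ has an equivalent hilbertian norm if and only if there are constants $0<A\le B$ with
\[
A\Big(\sum_n|a_n|^2\Big)^{1/2}\le\Big\|\sum_n a_n e_n\Big\|_N\le B\Big(\sum_n|a_n|^2\Big)^{1/2}
\]
for all finitely supported scalar sequences $(a_n)$.

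Next I would translate the two one-sided estimates into the language of the modular via the Luxemburg equation (\ref{eq:Lux-norm-1}). The upper estimate holds for some $B$ if and only if $\sup\{\sum_n|a_n|^{p_n}B^{-p_n}:\sum_n|a_n|^2=1\}\le1$, and the lower estimate holds for some $A$ if and only if $\inf\{\sum_n A^{p_n}|a_n|^{p_n}:\sum_n|a_n|^2=1\}\ge1$. These are exactly the extremal quantities controlled by Nakano's criterion: $\ell_{(p_n)}$ is linearly isomorphic to $\ell_q$, where $q=\lim p_n$, precisely when $\sum_n c^{\,1/|1/p_n-1/q|}<\infty$ for some $c\in(0,1)$. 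Specializing to $q=2$ and using the identity
\[
\frac{1}{|1/p_n-1/2|}=\frac{1}{|(2-p_n)/(2p_n)|}=\frac{2p_n}{|p_n-2|}
\]
produces precisely the series in the statement, completing the deduction.

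Finally---and this is where the real content lies---I would explain (or locate, in Nakano's argument) why the exponent $2p_n/|p_n-2|$ is the correct weight, by inspecting the extremal configurations. The upper estimate can fail only through the coordinates with $p_n<2$: spreading unit $\ell_2$-mass equally over a block of $m$ coordinates of exponent $\approx p<2$ gives this $\ell_2$-normalized vector the modular value $m^{(2-p)/2}=\exp(\tfrac{2-p}{2}\log m)$, so a genuine obstruction requires blocks of size $m\gtrsim\exp(c/|p-2|)$. Dually, using $N^*=\ell_{(p_n^*)}$ together with the symmetry $|1/p_n^*-1/2|=|1/p_n-1/2|$ of the exponents about $2$, the lower estimate fails only through the coordinates with $p_n>2$, again requiring blocks of exponential size $\exp(c/|p_n-2|)$. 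Assigning each coordinate the weight $c^{2p_n/|p_n-2|}$ and demanding summability is exactly the statement that no such blocks can be assembled. I expect the main obstacle to be making this extremal computation precise and uniform: pinning down the optimal block sizes and matching the constants so that both the sufficiency and the necessity of the series condition fall out. Once that step is granted (it is Nakano's theorem), the remaining ingredients---the reduction to the standard basis and the arithmetic identity above---are routine.
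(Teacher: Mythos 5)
Your proposal is correct and follows essentially the same route as the paper: both reduce the question to whether the unit vector basis of $N$ is equivalent to the $\ell_2$ basis (via the uniqueness, up to equivalence, of the normalized unconditional basis of $\ell_2$) and then invoke Nakano's Theorem 1 for the summability criterion. The extremal-block heuristic you add at the end is not needed, since that content is exactly what is delegated to Nakano's theorem in both arguments.
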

Indeed by Theorem 1 in \cite{N} this condition is necessary and sufficient for the unit vector basis of $N$ to be equivalent to the $\ell_2$ basis. If $N$ has an equivalent hilbertian norm, its unit vector basis is not necessarily  an orthonormal basis for the hilbertian structure, but it remains unconditional in the hilbertian norm (since unconditionality is preserved by linear isomorphisms, although the unconditionality constant may, of course, change). Since unconditional bases in a Hilbert space are all equivalent to the $\ell_2$ unit basis, Nakano's condition must then hold true.
\end{remark}

\section{First example: Nakano direct sums}\label{sec:Nakano-direct-sums}

The main result of this section (Corollary \ref{cor:Nakano sum categoricity}) states that every Nakano direct sum  of finite dimensional normed spaces associated to a Nakano space $N$ with exponent sequence  converging to 2,  {\it but different from 2}, satisfies condition (ii) of Theorem \ref{Criterion:UC}. The isometries of general Nakano spaces with exponent function strictly greater than 2 were studied in the article \cite{JKP}. Here we can avoid the latter restriction on $(p_n)$ by taking advantage of the fact that the Banach lattice $N$ is atomic. We state first a Proposition where the condition $p_n\ne 2$ is not required, with a view to getting some partial results also in this case (Corollary \ref{cor:Nakano space categoricity} and Remark \ref{rem:general NDS}).

\begin{proposition}\label{embeddings N to N+H}
Let $H$ be any Hilbert space, and $N$ be a Nakano sequence space  with exponent sequence $p_n\to 2$. Further, let $(E_n)$ be a sequence of finite dimensional normed spaces and $E=(\mathop{\oplus}\limits_n E_n)_N$ their $N$-direct sum. Consider also the partial direct sums $E_h$ and $E_{nh}$ corresponding respectively to the set of indices  $\{n:\ p_n=2\}$, and $\{n:\ p_n\ne 2\}$.  Then every linear isometric embedding from $E$ into $E\oplus_m H$ is modular preserving and maps $E_{nh}$ onto $E_{nh}$ and $E_h$ into $E_h\oplus_m H$.
\end{proposition}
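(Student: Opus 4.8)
The plan is to reduce the whole statement to a single block-recognition lemma and then harvest the three conclusions from it. Write $F=E\oplus_m H$ and, for each attained exponent value $q\in\{p_n:n\in\mathbb N\}$, set $E^{(q)}=\clspan\{e_n\otimes E_n:\ p_n=q\}$ and let $F^{(q)}$ be the analogous block of $F$, so that $F^{(q)}=E^{(q)}$ for $q\ne2$ while $F^{(2)}=E_h\oplus_m H$. Two elementary facts drive everything. First, each block carries a pure power law: since $E^{(q)}$ is an $\ell_q$-sum of the spaces $E_n$ with $p_n=q$, one has $\Theta_E(y)=\|y\|^q$ on $E^{(q)}$, and likewise $\Theta_F=\|\cdot\|^q$ on $F^{(q)}$ (for $q=2$ this is the $2$-sum $E_h\oplus_2 H$). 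Second, because $p_n\to2$, for every $q\ne2$ only finitely many indices satisfy $p_n=q$; as the $E_n$ are finite dimensional, each block $E^{(q)}$ with $q\ne2$ is \emph{finite dimensional}, and $E_{nh}=\clspan\bigcup_{q\ne2}E^{(q)}$.

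The crux is then the following Main Lemma: the isometric embedding $T$ satisfies $T(E^{(q)})\subseteq F^{(q)}$ for every $q$. Granting this, all three assertions follow cleanly. For modular preservation, decompose $x=\sum_q x_q$ with $x_q\in E^{(q)}$; since $Tx_q\in F^{(q)}$ and the blocks $F^{(q)}$ form a modular direct decomposition of $F$ (so $\Theta_F$ is additive across them), the power laws together with $\|Tx_q\|=\|x_q\|$ give
\[
\Theta_F(Tx)=\sum_q\Theta_F(Tx_q)=\sum_q\|Tx_q\|^q=\sum_q\|x_q\|^q=\sum_q\Theta_E(x_q)=\Theta_E(x).
\]
The case $q=2$ of the Main Lemma is exactly $T(E_h)=T(E^{(2)})\subseteq F^{(2)}=E_h\oplus_m H$. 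Finally, for $q\ne2$ we have $T(E^{(q)})\subseteq F^{(q)}=E^{(q)}$, so $T$ restricts to an isometric self-embedding of the \emph{finite dimensional} space $E^{(q)}$, which is therefore onto; taking closed spans over $q\ne2$ and using that $T(E_{nh})$ is closed yields $T(E_{nh})=E_{nh}$.

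It remains to prove the Main Lemma, and this is where the real work lies. Since the $E^{(q)}$ are spanned by the single-atom pieces $E_n$, it suffices to show $T(E_n)\subseteq F^{(p_n)}$ for each $n$, i.e.\ that $T$ recognizes the exponent attached to each atom. The mechanism is that this exponent is visible in the local geometry of the Luxemburg norm: if $w$ is a unit vector carried by an atom of exponent $q$ and $v$ a unit vector carried by a \emph{disjoint} atom of exponent $q'$, then solving the Luxemburg equation $\lambda^{-q}+|t|^{q'}\lambda^{-q'}=1$ for small $t$ gives $\|w+tv\|=1+|t|^{q'}/q+o(|t|^{q'})$, so that the contact order
\[
\lim_{t\to0^+}\frac{\|w+tv\|+\|w-tv\|-2}{|t|^{q'}}=\frac2q
\]
simultaneously reveals the exponent $q'$ of the perturbing direction and the exponent $q$ of the base point. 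On disjoint supports each atomic term contributes the \emph{exact} pure power $|t|^{q'}\|v\|^{q'}$, so these orders are unaffected by any corners of the individual $E_n$. As $T$ is a linear isometry all such orders are $T$-invariant, and the aim is to run this reading along the atoms of $N$, matching the exponent of $w$ with that of $Tw$ and forcing the non-$2$ part to avoid both the $q=2$ atoms and the Hilbert summand $H$ (all of whose directions have contact order $2$).

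The main obstacle is that $T$ need not preserve disjointness or the lattice structure a priori, so the clean computation above — which requires disjoint supports — cannot be applied directly to $Tw$ and $Tv$, whose images spread over many atoms of $F$ and over $H$. This is exactly where the atomicity of $N$ must be exploited: the plan is to establish a Lamperti-type equality condition (a modular Clarkson inequality that degenerates to an equality precisely on modular-disjoint pairs, valid because the relevant exponents are $\ne2$), thereby characterizing disjointness through the norm alone, and then to combine it with the contact-order reading to show that $T$ carries each atomic block into the block of the same exponent. Controlling the mixed exponents and the interaction with the Hilbert summand in this step is the delicate heart of the argument.
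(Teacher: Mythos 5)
Your reduction to the block-recognition lemma is sound and matches the paper's own reduction (the paper works with $\bar E_k=\Span\{e_n\otimes E_n:p_n=\bar p_k\}$, uses finite dimensionality of the non-$2$ blocks to upgrade ``into'' to ``onto'', and gets modular preservation from the pure power laws exactly as you do). The problem is that the Main Lemma itself --- the entire content of the proposition --- is not proved, and the route you sketch for it would not go through. A Lamperti/Clarkson-type equality characterization of disjointness is unavailable here: the $E_n$ are \emph{arbitrary} finite dimensional normed spaces (the proposition assumes nothing else about them), so no Clarkson inequality holds on $E$ in the first place, let alone one whose equality case detects modular disjointness; moreover the exponent-$2$ blocks and the Hilbert summand $H$ defeat any such characterization, since in a Hilbert space the parallelogram identity is an equality for \emph{every} pair. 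Your contact-order computation is correct for disjointly supported atoms, but, as you yourself note, it cannot be applied to $Tw$ and $Tv$, and the missing disjointness cannot be recovered by the proposed mechanism.

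The paper's way around this obstacle is worth absorbing, because it sidesteps disjointness entirely. Instead of perturbing a unit vector $w$ by a disjoint atom, one perturbs an arbitrary $x\in E$ by a \emph{weakly null} sequence of unit vectors $x_n\in E_n$: by the ultrapower decomposition of Proposition \ref{ultrapower-N} (parts 1 and 2, which need no hypothesis beyond finite dimensionality), such a sequence is asymptotically modular-disjoint from everything and contributes a pure square, giving $\|x+tx_n\|_E\to\|x+t\|_{E\oplus_m\K}$. Since $T$ is a bounded linear isometry it sends weakly null sequences to weakly null sequences, and the same lemma applied to $Tx_n=u_n+v_n$ yields $\|Tx+tTx_n\|\to\|Tx+t\|_{(E\oplus_m H)\oplus_m\K}$; comparing gives the key identity $\|x+1\|_{E\oplus_m\K}=\|Tx+1\|_{(E\oplus_m H)\oplus_m\K}$. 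Then Lemma \ref{norm-calc} ($\|\lambda x+1\|_m-1\sim\frac12\Theta(\lambda x)$ as $\lambda\to0$) converts this into $\bbar\Theta(\lambda Tx)\sim|\lambda|^{\bar p_j}\|x\|^{\bar p_j}$ for $x\in\bar E_j$, and since $\bbar\Theta(\lambda Tx)=\sum_k|\lambda|^{\bar p_k}\Theta(u_k)+|\lambda|^2\|v\|^2$ is additive across blocks \emph{no matter how $Tx$ is spread out}, comparing powers of $\lambda$ kills every component whose exponent differs from $\bar p_j$ (with $H$ surviving only when $\bar p_j=2$). This is the same ``exponents are visible in small-perturbation asymptotics'' intuition you had, but based at the asymptotic unit rather than at a concrete atom, which is precisely what makes the disjointness issue disappear. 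As written, your proposal is an accurate statement of the problem together with a correct scaffolding, but the load-bearing step is absent.
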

We first give two lemmas in preparation for the proof of Proposition \ref{embeddings N to N+H}.  Fix $N$, $(E_n)$, and $E=(\mathop{\oplus}\limits_n E_n)_N$ as in the hypotheses of the Proposition.  We regard $\K$ as the 1-dimensional modular space by taking its modular to be the square of the absolute value.

If a sequence $(x_n)$ in a Hilbert space $H$ is weakly null, then for every $x\in H$ we have
\[ \|x+x_n\|^2-(\|x\|^2+\|x_n\|^2)\to 0 \,\text{.}\]
In particular if $\|x_n\|\to a$ then $\|x+x_n\|^2\to \|x\|^2+a^2$. An analogous property for $E$ is given by the next lemma.

\begin{lemma}\label{weakly-null-seq}
Let $(x_n)$ be a weakly null sequence in $E$ with $\Theta(x_n)\to 1$. Then for every $x\in E$ and $t\in\K$, we have 
\[\Theta(x+tx_n)\to \Theta(x)+|t|^2 \hbox{ and } \|x+tx_n\|_E\to \|x+t\|_{E\oplus_m \K} \,\text{.} \]
\end{lemma}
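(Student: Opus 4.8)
The plan is to read the hypotheses of the sequence $(x_m)$ through the ultrapower picture supplied by Proposition \ref{ultrapower-N}, which already encodes exactly the asymptotic orthogonality and $2$-homogeneity we need. Fix a nonprincipal ultrafilter $\U$ on the index set and set $\xi=[x_m]_\U\in E_\U$ (legitimate since weakly null sequences are norm bounded). Because $(x_m)$ is weakly null, $\phi(x_m)\to 0$ for every functional $\phi$, so the weak limit projection $P$ kills $\xi$; that is, $\xi\in\H=\ker P$. Moreover $\Theta_\U(\xi)=\lim_{m,\U}\Theta(x_m)=1$. Writing $D$ for the diagonal embedding, we have $D(x)+t\xi=[x+tx_m]_\U$ with $D(x)\in E$ and $t\xi\in\H$, so Proposition \ref{ultrapower-N} (the decomposition $\Theta_\U(y+h)=\Theta(y)+\|h\|^2$ for $y\in E,\ h\in\H$, together with $\Theta_\U=\|\cdot\|^2$ on $\H$) gives
\[\Theta_\U(D(x)+t\xi)=\Theta(x)+\|t\xi\|^2=\Theta(x)+|t|^2\,\Theta_\U(\xi)=\Theta(x)+|t|^2.\]
Unravelling the left-hand side via the definition of $\Theta_\U$ yields $\lim_{m,\U}\Theta(x+tx_m)=\Theta(x)+|t|^2$.

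To upgrade this ultrafilter limit to a genuine sequential limit, I would argue by subsequences. Since $(x_m)$ is norm bounded, the $\Delta_2$ condition makes $\Theta(x+tx_m)$ a bounded sequence. If it did not converge to $\Theta(x)+|t|^2$, some subsequence would remain at distance $\ge\eps$ from that value; but any subsequence of $(x_m)$ is again weakly null with $\Theta\to 1$, so the computation above (for any nonprincipal ultrafilter on the subsequence) forces its ultrafilter limit to be $\Theta(x)+|t|^2$. On the other hand an ultrafilter limit of a bounded sequence contained in the closed set $\{r:|r-(\Theta(x)+|t|^2)|\ge\eps\}$ lies in that set, a contradiction. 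Hence $\Theta(x+tx_m)\to\Theta(x)+|t|^2$.

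For the norm statement I would pass from modulars to Luxemburg norms using the implicit definition (\ref{eq:Lux-norm-1}). Applying the modular limit just proved with $x,t$ replaced by $x/\lambda,\ t/\lambda$ (note $x_m$ stays weakly null with $\Theta(x_m)\to 1$) gives, for each fixed $\lambda>0$,
\[\Theta\!\left(\frac{x+tx_m}{\lambda}\right)\to\Theta\!\left(\frac{x}{\lambda}\right)+\frac{|t|^2}{\lambda^2}=:g(\lambda),\]
where $g(\lambda)$ is precisely the value at $(x,t)/\lambda$ of the modular of $E\oplus_m\K$. Each function $\lambda\mapsto\Theta((x+tx_m)/\lambda)$ and $g$ is continuous and strictly decreasing on $(0,\infty)$ by convexity and faithfulness, and by (\ref{eq:Lux-norm-1}) the numbers $\|x+tx_m\|_E$ and $\|x+t\|_{E\oplus_m\K}$ are their respective level-$1$ points. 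Pointwise convergence of monotone functions to a continuous strictly monotone limit transfers to convergence of the level-$1$ points, giving $\|x+tx_m\|_E\to\|x+t\|_{E\oplus_m\K}$.

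The main obstacle is the asymptotic orthogonality behind the first identity: one must know that the modular mass of $x_m$ escapes into the high-index coordinates, where $p_n\approx 2$, so that the fixed $x$ and the scalar $t$ interact with $x_m$ only through a squared Hilbertian term. This is exactly the content of Proposition \ref{ultrapower-N}, which is why routing through the ultrapower is efficient. If one preferred a direct proof avoiding the subsequence step, I would split $\Theta$ into a finite head (where each coordinate $x_m(n)\to 0$ by finite-dimensionality of $E_n$, since weak and norm convergence coincide there) and a tail, controlling the tail cross-terms by the uniform $\Delta_2$-Lipschitz estimate on balls and using $p_n\to 2$ to replace $|t|^{p_n}$ by $|t|^2$; that route delivers the genuine sequential limit directly but at the cost of these explicit tail estimates.
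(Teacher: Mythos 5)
Your proof is correct and follows essentially the same route as the paper: both pass to a nonprincipal ultrapower, identify $[x_n]_\U$ as an element of $\H=\ker P$, invoke Proposition \ref{ultrapower-N} to compute $\Theta_\U(x+t\xi)=\Theta(x)+|t|^2$, and conclude the ordinary limit because the $\U$-limit is independent of the ultrafilter (your subsequence argument is just the standard justification of that step, which the paper states without proof). Your treatment of the norm convergence via level-$1$ points of the monotone functions $\lambda\mapsto\Theta((x+tx_n)/\lambda)$ is a slightly more explicit version of the paper's one-line identification of the Luxemburg norm of $\Theta_\U$ with the ultrapower norm.
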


\begin{proof}
Let $\U$ be a nonprincipal ultrafilter on $\mathbb N$. Since $x_n\to 0$ weakly,  the corresponding element $\xi=[x_n]_\U$ of $E_\U$ belongs to $\ker P=\H$. Thus 
\[\lim_{n,\U}\Theta(x+tx_n)= \Theta_\U(x+t\xi)=\Theta(x)+t^2\Theta_\U(\xi)=\Theta(x)+t^2 \,\text{.} \]
Since the $\U$-limit does not depend on $\U$, it is also the ordinary limit.  Similarly
\[\lim_{n,\U}\|x+tx_n\|_E= \|x+t\xi\|_{E\oplus_m \H}=\inf\{\lambda>0: \Theta\!\left(\frac x\lambda\right)+\frac {t^2}{\lambda^2}=1\} =\|x+t\|_{E\oplus_m\K} \,\text{.} \]
\end{proof}

The next lemma, valid for any modular space $M$, yields an estimation of $\|x+t\|_{M\oplus_m\K}$ for small $x\in M$.  By homogeneity of the norm, it suffices to do this for $t=1$.

\begin{lemma}\label{norm-calc}
Let $M$ be a modular space with convex modular $\Theta$ satisfying a $\Delta_2$ condition. Then for $x\to 0$ in $M$ we have
\[ \| x+1\|_{M\oplus_m\K}-1\sim \frac 12 \Theta(x) \,\text{.} \]
\end{lemma}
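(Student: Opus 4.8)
The plan is to work from the implicit form of the Luxemburg norm. Writing $\lambda=\|x+1\|_{M\oplus_m\K}$ and recalling that the modular of $M\oplus_m\K$ sends $x+t$ to $\Theta(x)+|t|^2$, equation (\ref{eq:Lux-norm-1}) characterises $\lambda$ by
\[\Theta\!\left(\frac{x}{\lambda}\right)+\frac{1}{\lambda^2}=1.\]
Since $\Theta\ge 0$, this forces $1/\lambda^2\le 1$, so $\lambda\ge 1$; and as $x\to 0$ the left side at $\lambda=1$ tends to $1$, so $\lambda\to 1$. First I would set $\eps=\lambda-1\ge 0$ and rewrite the defining equation as
\[\Theta\!\left(\frac{x}{\lambda}\right)=1-\frac{1}{\lambda^2}=\frac{2\eps+\eps^2}{(1+\eps)^2}=2\eps\,(1+O(\eps)),\]
so that, since $\|x+1\|_{M\oplus_m\K}-1=\eps$, the lemma reduces to the single asymptotic $\Theta(x/\lambda)\sim\Theta(x)$ as $x\to 0$, together with the elementary bookkeeping above.

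Next I would extract the upper bound for free. Because $\Theta$ is convex with $\Theta(0)=0$, the map $s\mapsto\Theta(sx)$ is nondecreasing on $[0,\infty)$, so $\lambda\ge 1$ gives $\Theta(x/\lambda)\le\Theta(x)$. Combined with the display above this yields $2\eps(1+O(\eps))\le\Theta(x)$, whence $\eps=O(\Theta(x))$, and in particular $\eps\le\tfrac12\Theta(x)(1+o(1))$ as $x\to 0$.

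The reverse inequality is where the real work lies, and the main obstacle is that $\|x\|$ need not be comparable to $\Theta(x)$ as $x\to 0$ — for instance in $\ell_p$ with $p>1$ one has $\Theta(x)\approx\|x\|^p\ll\|x\|$, so any estimate carrying a loose factor of $\|x\|$ risks overwhelming $\Theta(x)$. To control the gap I would invoke the $\Delta_2$ hypothesis, which (as recorded in Section \ref{sec:General}) guarantees that $\Theta$ is Lipschitz, with some constant $L$, on a fixed ball containing $x$ and $x/\lambda$ for all small $x$. Since $x-x/\lambda=\tfrac{\eps}{\lambda}x$, this gives
\[\Theta(x)-\Theta\!\left(\frac{x}{\lambda}\right)\le L\,\|x\|\,\frac{\eps}{\lambda}\le L\,\|x\|\,\eps.\]
The key point is that this crude Lipschitz bound, useless on its own because of the $\|x\|$ factor, becomes harmless once the a priori estimate $\eps=O(\Theta(x))$ from the previous step is fed in: the right side is then $O(\|x\|\,\Theta(x))$, so dividing by $\Theta(x)$ and using $\|x\|\to 0$ shows $\Theta(x/\lambda)=\Theta(x)\,(1-o(1))$. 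Substituting this into $\Theta(x/\lambda)=2\eps\,(1+O(\eps))$ finally gives $\eps\sim\tfrac12\Theta(x)$, that is $\|x+1\|_{M\oplus_m\K}-1\sim\tfrac12\Theta(x)$, as claimed.
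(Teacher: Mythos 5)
Your proof is correct and follows essentially the same route as the paper: both start from the implicit Luxemburg equation $\Theta(x/\lambda)+1/\lambda^2=1$ with $\lambda=\|x+1\|_{M\oplus_m\K}$ and reduce the claim to $\Theta(x/\lambda)\sim\Theta(x)$. The only difference is that the paper asserts this last equivalence without comment, whereas you justify it explicitly (via the a priori bound $\lambda-1=O(\Theta(x))$ fed into the local Lipschitz estimate coming from $\Delta_2$), which is a welcome filling-in of the one nontrivial step.
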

\begin{proof}
We have
\[ 1=\Theta\left(x+1\over \| x+1\|_m\right)=  \Theta\left( x\over \| x+1\|_m\right)+{1\over \| x+1\|_m^2}\]
hence
\[\| x+1\|_m^2-1= \| x+1\|_m^2\,\Theta\!\left( x\over \| x+1\|_m\right)\sim \Theta( x)\]
and the result follows since $\| x+1\|_m^2-1\sim 2(\| x+1\|_m-1)$.
\end{proof}

\begin{proof}[Proof of Proposition \ref{embeddings N to N+H}]
Let $T$ be an isometric embedding from $E$ into $E\oplus_m H$. We denote by $\bbar\Theta$ the modular on $E\oplus_m H$; that is, $\bbar\Theta(x+h)=\Theta(x)+\|h\|^2$ whenever $x\in E,h\in H$. Let $(\bar p_k)_{k\in \mathbb N}$ be an enumeration of the distinct values of the exponent sequence $(p_n)$, and $\bar E_k=\Span\{ e_n\otimes E_n: p_n=\bar p_k\}$.  It will be sufficient to prove that $T$ maps each $\bar E_k$ into itself, and $E_h$ into $E_h\oplus H$: indeed since $T$ is injective and the spaces $\bar E_k$ with  $\bar p_k\ne 2$ have finite dimension, so  $T$ will in fact map each of those $\bar E_k$ onto itself, and thus $E_{nh}$ onto $E_{nh}$. Moreover for $x\in \bar E_k$, $\Theta(x)=\|x\|^{\bar p_k}$, and for $x\in H$, $\Theta(x)=\|x\|^2$, so that the isometry $T$ will be modular preserving. 

Let $j\in\mathbb N$ be fixed, and for every $n\in \mathbb N$ choose $x_n\in E_n$ with $\|x_n\|=1$. Then $x_n\to 0$ weakly in $E$, and for every $x\in E$  we have by Lemma \ref{weakly-null-seq}
 \[ \|x+x_n\|_E\to \| x+1\|_{E\oplus_m \K} \,\text{.} \]
Then since $T$ is an isometry
\begin{align}\label{cv1}
\|Tx+Tx_n\|_{E\oplus_m H}= \| x+x_n\|_E\To_{n\to\infty} \|x+1\|_{E\oplus_m \K} \,\text{.}
\end{align}
On the other hand, since $x_n\to 0$ weakly in $E$ as $n\to\infty$, we see $Tx_n\to 0$ weakly in $E\oplus H$. If we let $Tx_n=u_n+v_n$, $u_n\in E$, $v_n\in H$ be the decomposition of $Te_n$ in the direct sum $E\oplus H$, then we have separate weak convergences $u_n\to 0$ in $E$ and $v_n\to 0$ in $H$.  Consider a subsequence $(n_k)$ such that $\Theta(u_{n_k})$ and $\|v_{n_k}\|^2$ converge to, say, $a^2$ and $b^2$ respectively. Note that $a^2+b^2=\lim_k \overline\Theta(Tx_{n_k})= 1$ since $\|Tx_n\|=\|x_n\|=1$. Let $Tx=u+v$, with $u\in E$ and $v\in H$. Then for every $\lambda>0$
\begin{align*}
\bbar\Theta((\lambda Tx+Tx_{n_k}))&= \Theta(\lambda (u+u_{n_k}))+\|\lambda (v+v_{n_k})\|^2 \\
&\to \Theta(\lambda u)+\lambda^2a^2+\|\lambda v\|^2+\lambda^2 b^2  \quad\hbox{ (by Lemma \ref{weakly-null-seq})}\\
&=\bbar\Theta(\lambda Tx)+\lambda^2 \,\text{.}
\end{align*}
Since the limit is independent of the subsequence,  $\bbar\Theta(\lambda (Tx+Tx_{n}))\to \bbar\Theta(\lambda Tx)+\lambda^2$ and choosing $\lambda=(\|Tx+1\|_{(E\oplus_m H)\oplus_m\K})^{-1}$ it follows that
\begin{align}\label{cv2}
\|Tx+Tx_n\|_{E\oplus_m H}\to \| Tx+1\|_{(E\oplus_m H)\oplus_m \K} \,\text{.}
\end{align}
Comparing (\ref{cv1}) and  (\ref{cv2}), we see that
\[\|x+1\|_{E\oplus_m \K}= \|Tx+1\|_{(E\oplus_m H)\oplus_m \K} \,\text{.} \]
Assume now that $x\in\bar E_j$. By Lemma \ref{norm-calc} we have for $\lambda\to 0$
\[ \|\lambda x+1\|_m-1\sim \frac 12\Theta(\lambda x)=\frac 12 |\lambda|^{\bar p_j}\|x\|^{\bar p_j}\]
and
\[ \|\lambda Tx+1\|_m -1\sim \frac 12 \bbar\Theta(\lambda Tx)\]
hence
\[\bbar\Theta(\lambda Tx)\sim |\lambda|^{\bar p_j}\|x\|^{\bar p_j} \,\text{.} \]
Assume that $\|x\|=1$. We decompose $Tx=\sum_k u_k+v$, where $u_k\in \bar E_k$ and $v\in H$. Then
\begin{align}\label{eq:Theta}
\bbar\Theta(\lambda Tx) &= \sum_k \Theta(\lambda u_k)+\|\lambda v\|_H^2 = \sum_k |\lambda|^{\bar p_k} \Theta(u_k)+|\lambda|^2 \|v\|_H^2
\end{align}
and hence
\begin{align}\label{equivalence}
1\sim |\lambda|^{-\bar p_j}\bbar\Theta(\lambda Tx) = \sum_k |\lambda|^{\bar p_k-\bar p_j} \Theta(u_k)+|\lambda|^{2-\bar p_j} \|v\|_H^2 \,\text{.}
\end{align}
This implies that $\Theta(u_k)=0$ if $\bar p_k<\bar p_j$ and $v=0$ if $2<\bar p_j$ (otherwise the right side of
equation (\ref{equivalence}) goes to $+\infty$ when $\lambda\to 0$).  On the other hand
\[\sum_{k:\,\bar p_k>p_j}|\lambda|^{\bar p_k-\bar p_j} \Theta(u_k)\to 0 \hbox{ and } |\lambda|^{2-\bar p_j} \|v\|_H^2\to 0 \hbox{ if } 2>\bar p_j \]
thus the right side of (\ref{equivalence}) is equivalent to $\Theta(u_j)$. We have thus $\Theta(u_j)=1$. 
Since $\|Tx\|=\|x\|=1$ we have $\Theta(Tx)=1$ and thus equality (\ref{eq:Theta}) with $\lambda=1$ shows that $\Theta(u_k)=0$ for all $k\ne j$, as well as $\|v\|_H=0$ if $2\ne \bar p_j$. Finally, we conclude $Tx\in \bar E_j$ as desired, except if $\bar p_j=2$, in which case $Tx\in \bar E_j\oplus H$.
\end{proof}

By Propositions \ref{ultrapower-N} and \ref{embeddings N to N+H} and Theorem \ref{Criterion:UC} we conclude:
\begin{corollary} 
\label{cor:Nakano sum categoricity}
Let $N$ be a Nakano sequence space with exponent sequence $(p_n)$, where  $p_n\ne 2$ for all $n$, and $p_n$  converges to 2.  Let $(E_n)$ be a sequence of finite dimensional normed spaces whose Jordan-von Neumann constants converge to 1.  Then the elementary class of the Nakano direct sum $E=(\oplus E_n)_N$ is equal to the class of all modular direct sums $E\oplus_m H$ of $E$ with arbitrary Hilbert spaces, and hence it is uncountably categorical.
\end{corollary}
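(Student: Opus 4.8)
The plan is to verify that the Nakano direct sum $E=(\oplus_n E_n)_N$ satisfies the two hypotheses of Theorem \ref{Criterion:UC}; everything else then follows mechanically from that theorem. First I would record that the standing assumptions of Theorem \ref{Criterion:UC} are met: since the exponent sequence $(p_n)$ is bounded (it converges to $2$), the modular $\Theta$ satisfies a $\Delta_2$ condition, as already observed at the start of Section \ref{sec:asympt-H-Nakano}, and $E$ is infinite dimensional because there are infinitely many nonzero finite-dimensional summands $E_n$ (otherwise $E$ would be finite dimensional and there would be nothing to prove). Thus $E$ is an infinite-dimensional $\Delta_2$ modular Banach space, and the first part of Theorem \ref{Criterion:UC} applies once conditions (i) and (ii) are established.

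For condition (i) I would simply invoke Proposition \ref{ultrapower-N}. Its hypotheses—namely $p_n\to 2$, the $E_n$ finite dimensional, and $a(E_n)\to 1$—are exactly the assumptions of the corollary. The proposition then produces, for every ultrafilter $\U$, a modular decomposition $E_\U=E\oplus_m\H$ in which the copy of $E$ is the diagonal image $D(E)$ and $\H=\ker P$ is linearly isometric to a Hilbert space. Since this decomposition is a modular direct sum, the identity map exhibits $E_\U$ as linearly isomodular to $E\oplus_m\H$, which is precisely condition (i).

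For condition (ii) the key observation is that the hypothesis $p_n\ne 2$ for every $n$ degenerates the ``hilbertian part.'' In the notation of Proposition \ref{embeddings N to N+H}, the index set $\{n:\ p_n=2\}$ is empty, so $E_h=\{0\}$ and $E_{nh}=E$. Proposition \ref{embeddings N to N+H} asserts that every linear isometric embedding $T\colon E\to E\oplus_m H$ maps $E_{nh}$ onto $E_{nh}$; with $E_{nh}=E$ this says exactly that $T$ maps $E$ onto $E$, which is condition (ii). I would also note in passing that the same proposition shows that such a $T$ is automatically modular preserving, a fact consistent with condition (ii) though not needed beyond it.

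With (i) and (ii) in hand, Theorem \ref{Criterion:UC} yields immediately that the elementary class of $E$ equals $\{E\oplus_m H:\ H\text{ a Hilbert space}\}$ and that this class is uncountably categorical (and $E$ is separable). The proof is therefore an assembly of the two propositions through the theorem, and I expect no genuinely new obstacle; the only step requiring care is the bookkeeping that $p_n\ne 2$ for all $n$ collapses $E_{nh}$ to all of $E$, so that the conclusion of Proposition \ref{embeddings N to N+H} delivers surjectivity of isometric self-embeddings rather than merely the partial statement valid when some exponents equal $2$.
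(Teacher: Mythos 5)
Your proposal is correct and follows exactly the route the paper takes: the corollary is stated there as an immediate consequence of Proposition \ref{ultrapower-N} (giving condition (i)), Proposition \ref{embeddings N to N+H} (giving condition (ii), since $p_n\ne 2$ for all $n$ forces $E_{nh}=E$), and Theorem \ref{Criterion:UC}. Your additional bookkeeping about the $\Delta_2$ condition and infinite dimensionality is accurate and fills in details the paper leaves implicit.
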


In the scalar case we can drop the condition $p_n\ne 2$. Denote by $N_{nh}$ the Nakano space associated to the subsequence (which may be finite or not) that consists of the exponents that differ from 2 .

\begin{corollary} \label{cor:Nakano space categoricity}
Let $N$ be a Nakano sequence space with exponent sequence $(p_n)$ converging to 2.   Then the elementary class of 
$N$ consists of:
\begin{itemize}
\item[--] the class of all modular direct sums $N_{nh}\oplus_m H$ of $N_{nh}$ with an arbitrary Hilbert space $H$, if an infinity of exponents $p_n$ differ from 2.

\item[--] the class of all modular direct sums $N_{nh}\oplus_m H$ of $N_{nh}$ with an infinite dimensional Hilbert space $H$, if not.
\end{itemize}
In both cases this class is uncountably categorical.
\end{corollary}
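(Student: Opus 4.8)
The plan is to reduce both cases to results already in hand by splitting $N$ along the indices where $p_n=2$. First I would write $N=N_{nh}\oplus_m N_h$, where $N_{nh}=\clspan[e_n:\ p_n\ne 2]$ and $N_h=\clspan[e_n:\ p_n=2]$; this is a genuine modular direct decomposition because the two groups of coordinates have disjoint supports, so $\Theta(x+y)=\Theta(x)+\Theta(y)$ for $x\in N_{nh}$ and $y\in N_h$. On $N_h$ the modular is $\sum_{p_n=2}|x(n)|^2$, whence its Luxemburg norm is the $\ell_2$-norm and $N_h$ is a Hilbert space (of hilbertian dimension equal to the number of indices with $p_n=2$). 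The space $N_{nh}$ is itself a scalar Nakano space, namely the Nakano direct sum of the one-dimensional factors $E_n=\K$ indexed by $\{n:\ p_n\ne2\}$, and its exponent subsequence still converges to $2$.

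For the first case (infinitely many $p_n\ne2$), $N_{nh}$ is infinite dimensional. Since the parallelogram law holds exactly in the one-dimensional space $\K$, we have $a(\K)=1$, so the Jordan-von~Neumann constants of the factors are identically $1$ and \emph{a fortiori} converge to $1$. I would then apply Corollary \ref{cor:Nakano sum categoricity} to $N_{nh}$ to conclude that its elementary class is exactly the family $\{N_{nh}\oplus_m H:\ H\ \text{a Hilbert space}\}$, which is uncountably categorical. Because $N_h$ is a Hilbert space, $N=N_{nh}\oplus_m N_h$ already belongs to this family; hence $N$ is elementarily equivalent to $N_{nh}$, and the elementary class of $N$ coincides with that of $N_{nh}$, giving the first assertion.

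For the second case (only finitely many $p_n\ne2$), $N_{nh}$ is finite dimensional while $N_h\cong\ell_2$ is infinite dimensional, so $N=N_{nh}\oplus_m\ell_2$. Here I would invoke the finite dimensional part of Theorem \ref{Criterion:UC} with $E_0=N_{nh}$, a finite dimensional (hence $\Delta_2$) modular space. The only thing to check is hypothesis (ii$'$): that every linear isometric embedding $T\colon N_{nh}\oplus_m\ell_2\to N_{nh}\oplus_m H$, with $H$ infinite dimensional Hilbert, maps $N_{nh}$ onto $N_{nh}$. To verify this I would split off a separable summand $H=\ell_2\oplus_2 H'$ and rewrite the target as $N_{nh}\oplus_m\ell_2\oplus_m H'=N\oplus_m H'$, so that $T$ becomes an isometric embedding of $N$ into $N\oplus_m H'$. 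Proposition \ref{embeddings N to N+H}, applied with $E=N$ and Hilbert space $H'$, then gives $T(N_{nh})=N_{nh}$, which is exactly (ii$'$). Theorem \ref{Criterion:UC} would then identify the elementary class of $N=N_{nh}\oplus_m\ell_2$ with $\{N_{nh}\oplus_m H:\ H\ \text{infinite dimensional Hilbert}\}$ and show it is $\kappa$-categorical for every infinite $\kappa$.

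The main obstacle, and the only delicate point, is the bookkeeping in the second case: one must confirm that the copy of $N_{nh}$ singled out by Proposition \ref{embeddings N to N+H} inside $N\oplus_m H'$ is precisely the copy of $N_{nh}$ appearing in the original target $N_{nh}\oplus_m H$, so that the conclusion $T(N_{nh})=N_{nh}$ is indeed the statement required by (ii$'$). Since the first summand $N_{nh}$ of $N=N_{nh}\oplus_m\ell_2$ survives intact in the rewriting $N\oplus_m H'=N_{nh}\oplus_m(\ell_2\oplus_2 H')=N_{nh}\oplus_m H$, the two distinguished copies do match up, and everything else is a direct application of Corollary \ref{cor:Nakano sum categoricity}, Proposition \ref{embeddings N to N+H}, and Theorem \ref{Criterion:UC}.
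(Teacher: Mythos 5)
Your proposal is correct and follows essentially the same route as the paper: split $N=N_{nh}\oplus_m N_h$, apply Corollary \ref{cor:Nakano sum categoricity} to $N_{nh}$ (one-dimensional factors, Jordan--von Neumann constants equal to $1$) when $N_{nh}$ is infinite dimensional, and otherwise verify hypothesis (ii$'$) of Theorem \ref{Criterion:UC} via Proposition \ref{embeddings N to N+H}. Your explicit rewriting of the target $N_{nh}\oplus_m H$ as $N\oplus_m H'$ in the second case just spells out a step the paper leaves implicit.
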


\begin{proof}
If $N_{nh}$ is infinite dimensional, then by Corollary \ref{cor:Nakano sum categoricity}, its elementary class consists of all modular direct sums $N_{nh}\oplus_m H$ of $N_{nh}$ with an arbitrary Hilbert space $H$. In particular $N$ is elementarily equivalent to $N_{nh}$, and thus its elementary class is the same.

If $N_{nh}$ is finite dimensional, then it follows from Proposition \ref{embeddings N to N+H} that every isometric linear embedding from $N_{nh}\oplus_m \ell_2$ into $N_{nh}\oplus_m H$ sends $N_{nh}$ onto itself. Then by Theorem \ref{Criterion:UC}, 
the elementary class of $N_{nh}\oplus_m\ell_2$ consists of all  modular direct sums $N_{nh}\oplus_m H$ of $N_{nh}$ with an arbitrary infinite dimensional Hilbert space $H$. This class contains $N$, the elementary class of which is thus the same.
\end{proof}

\begin{remark}
We have a similar result to Corollary \ref{cor:Nakano space categoricity} for Nakano direct sums if we require that $E_n$ is 1-dimensional whenever $p_n=2$.
\end{remark}
\begin{remark}\label{rem:general NDS}
In section \ref{sec:2-direct-sums} we shall give several examples of 2-direct sums $F=(\mathop{\oplus}\limits_n F_n)_{\ell_2}$, where the normed spaces $F_n$ are finite dimensional, such that $F$ satisfies the hypotheses of Theorem \ref{Criterion:UC}. 
Given such an $F$, we consider further a Nakano direct sum $E=(\mathop{\oplus}\limits_n E_n)_N$, where $N$ is a Nakano sequence space with exponents $(p_n)$ all distinct from 2 and such that either $(p_n)$ is finite or it converges to 2.
Then the modular sum $E \oplus_m F$ satisfies the hypotheses of Theorem \ref{Criterion:UC}.  Indeed, if $H$ is any Hilbert space, it follows from Proposition \ref{embeddings N to N+H} that every linear isometric map from $E\oplus_m F$ into $E\oplus_m F\oplus_m H$ maps $E$ onto $E$ and $F$ into $F\oplus_m H$, and thus  $F$ onto $F$. 
\end{remark}

In the spirit of the preceding Remark we now give a general lemma about isometries of modular direct sums, which will have several applications in the next section.
\begin{lemma}\label{isom-modular-sum}
Let $E_1, F_1,E_2,F_2$ be modular spaces, and $T:E_1\oplus_m F_1\to E_2\oplus_m F_2$ a linear isometric embedding. If $T(E_1)=E_2$ then $T(F_1)\subset F_2$.
\end{lemma}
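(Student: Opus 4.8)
The plan is to isolate a Birkhoff--James orthogonality relation between the two summands of a modular direct sum, observe that it is an isometric invariant, and then transport it across $T$. Throughout write $\Theta_A,\Theta_B$ for the modulars of two modular spaces and $\bar\Theta=\Theta_A+\Theta_B$ for the modular of $A\oplus_m B$; recall that a vector $a\in A$ is Birkhoff--James orthogonal to $b\in B$ when $\|a+\lambda b\|\ge\|a\|$ for every scalar $\lambda\in\K$, a relation preserved by any linear isometric embedding $T$ since $\|Tx+\lambda Ty\|=\|x+\lambda y\|\ge\|x\|=\|Tx\|$.

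The first and central step will be the following geometric fact about an arbitrary modular sum $A\oplus_m B$: for $a\in A$ and $b\in B$ one has $\|a+b\|\ge\|b\|$ with equality if and only if $a=0$, and symmetrically $\|a+b\|\ge\|a\|$ with equality iff $b=0$. I would prove this from the implicit Luxemburg equation (\ref{eq:Lux-norm-1}): putting $g=\|a+b\|$ gives $\Theta_A(a/g)+\Theta_B(b/g)=1$, so $\Theta_B(b/g)\le 1=\Theta_B(b/\|b\|)$ whenever $b\ne 0$. Since $\Theta_B$ is convex with $\Theta_B(0)=0$ and faithful, the function $r\mapsto\Theta_B(rb)$ is strictly increasing on $[0,\infty)$ for $b\ne 0$, whence $g\ge\|b\|$, with equality exactly when $\Theta_A(a/g)=0$, i.e.\ $a=0$ by faithfulness (the case $b=0$ being trivial). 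Replacing $b$ by $\lambda b$ then gives $\|a+\lambda b\|\ge\|a\|$ for all $\lambda\in\K$; that is, every $a\in A$ is Birkhoff--James orthogonal to every $b\in B$, and by the symmetric clause conversely.

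Next I would transport this through $T$ and conclude. Fix $f_1\in F_1$ and decompose $T(f_1)=e+f$ with $e\in E_2$, $f\in F_2$. By the orthogonality just established, $f_1$ is orthogonal to every $e_1\in E_1$, so $T(f_1)$ is orthogonal to every $T(e_1)$; since $T(E_1)=E_2$ this says precisely that $\|T(f_1)+e_2\|\ge\|T(f_1)\|$ for all $e_2\in E_2$. Choosing $e_2=-e$ gives $\|T(f_1)\|\le\|T(f_1)-e\|=\|f\|$. On the other hand, Step~1 applied in $E_2\oplus_m F_2$ with $a=e$, $b=f$ gives $\|T(f_1)\|=\|e+f\|\ge\|f\|$, with equality only if $e=0$. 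Hence $\|T(f_1)\|=\|f\|$, forcing $e=0$, so $T(f_1)=f\in F_2$; as $f_1$ was arbitrary, $T(F_1)\subset F_2$.

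The main obstacle is Step~1, and specifically extracting the strict equality clause rather than mere weak orthogonality: the argument hinges entirely on the statement ``$\|e+f\|=\|f\|$ forces $e=0$'', which rests on the strict monotonicity of $r\mapsto\Theta_B(rb)$ derived from convexity and faithfulness. A secondary point to handle with care is phrasing the orthogonality over all scalars $\lambda\in\K$ rather than only real ones; this is harmless since symmetry of the modular gives $\Theta_B(\lambda b)=\Theta_B(|\lambda|\,b)$, so nothing is lost in passing to complex scalars.
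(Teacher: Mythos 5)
Your proof is correct, and it runs on essentially the same mechanism as the paper's: decompose $Tf_1=e+f$, use surjectivity of $T|_{E_1}$ (via choosing $e_2=-e$, which is the paper's ``find $y$ with $Ty=e$, so $f=T(f_1-y)$''), invoke contractivity of the factor projection onto $F_1$ to get $\|f\|\ge\|Tf_1\|$, and force $e=0$. The only divergence is the final step: the paper reads off $\Theta(e)=0$ in one line from the modular additivity $\Theta(e)+\Theta(f)=\overline\Theta(Tf_1)=1$ together with $\Theta(f)\ge 1$, whereas you extract $e=0$ from the strict equality clause of your Step~1 inequality $\|e+f\|\ge\|f\|$, which costs you the (correct) strict-monotonicity argument for $r\mapsto\Theta_B(rb)$ but keeps the whole argument at the level of norms.
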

\begin{proof}
Let $f\in F_1$ with $\|f\|=1$ and decompose $Tf=x+g$ with $x\in E_2$ and $g\in F_2$.  We have
\begin{align}\label{eq}
1=\overline\Theta(Tf)=\Theta(x)+\Theta(g) \,\text{.}
\end{align}
Since $T \colon E_1\to E_2$ is onto we can find $y\in E_1$ with $x=Ty$, and thus $g= T(f-y)$. Then
\[ \|g\|=\|T(f-y)\|=\|f-y\|\ge \|f\|=1 \]
because in the modular sum, factor projections are contractive.  Using (\ref{eq}) we find that $\Theta(x)=0$ and thus $x=0$.
\end{proof}

\section{Second example: 2-direct sums}\label{sec:2-direct-sums}

In this section we consider direct sums of the form $E=(\mathop{\oplus}\limits_n E_n)_{\ell_2}$, where the normed spaces $E_n$ are finite dimensional. The norm of $x=(x_n)$ is given by $\|x\|^2= \sum_{n=1}^\infty \|x_n\|_{E_n}^2$. Since this is a special case of Nakano direct sum, with constant exponent function $p_n\equiv 2$, Proposition \ref{ultrapower-N} of Section \ref{sec:asympt-H-Nakano} applies:  if $a_n(E_n)\to 1$ then $E$ is asymptotically hilbertian; in fact $E_\U=E\oplus_2 \H$ for some Hilbert space $\H$ depending on $\U$. However in contrast to the Nakano direct sums  treated in Section \ref{sec:Nakano-direct-sums}, here we need some relatively strong hypotheses on the $E_n$'s for proving that the isometric embeddings from $E$ into $E\oplus_2\H$ send each $E_n$ into itself.  See Propositions \ref{prop:rigidity} and \ref{prop:rigidity:2} and Corollary \ref{cor:rigid:mix} for the kinds of assumptions under which we have been able to carry out the required arguments.  Example \ref{examples} summarizes the specific examples $E=(\mathop{\oplus}\limits_n E_n)_{\ell_2}$ for which we prove uncountable categoricity in this section.

\begin{remark}

If the Banach-Mazur distance $d(E_n,\ell_2^{d_n})$ is not bounded, where $d_n=\mathrm{dim}\,E_n$, then the space $E$ is not linear-topologically isomorphic to a Hilbert space.
Indeed if $E$ is C-linearly isomorphic to a Hilbert space, so is every closed linear subspace of $E$.
\end{remark} 

\begin{proposition}\label{prop:rigidity}
Let $(E_n)$ be a sequence of finite dimensional Banach spaces. 
Assume that for some sequence of exponents  $p_n> 2$, with $p_n\to 2$,  the following conditions are satisfied:

\noindent a) For every $n$, and every $x,y\in E_n$ we have 
\[\|x+y\|^2_{E_n}+ \|x-y\|_{E_n}^2
\ge 2(\|x\|_{E_n}^{p_n}+\|y\|_{E_n}^{p_n})^{2/p_n} \,\text{;} \]

\smallskip
\noindent b) For every $n$ there exists a basis $\mathcal B_n$ of $E_n$ such that for every $y\in \mathcal B_n$ there is $x\in \mathcal B_n$ such that $(x,y)$ is an $\ell_{p_n}$-pair, that is $\|x+\lambda y\|_{E_n}^{p_n}=1+|\lambda|^{p_n}$ for every $\lambda\in\reel$.
\smallskip

Then the hypotheses of Theorem \ref{Criterion:UC} are satisfied, and the Banach space $E=(\mathop{\oplus}\limits_n E_n)_{\ell_2}$ is uncountably categorical.
\end{proposition}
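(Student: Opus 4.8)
The plan is to verify the two hypotheses (i) and (ii) of Theorem \ref{Criterion:UC} for the $2$-direct sum $E=(\oplus_n E_n)_{\ell_2}$, whose modular is $\Theta(x)=\sum_n\|x(n)\|_{E_n}^2=\|x\|_E^2$; note that $E$ is infinite dimensional and separable, and its modular trivially satisfies $\Delta_2$ since the exponent is constantly $2$. Condition (i) I would read off directly from Proposition \ref{ultrapower-N}, which already supplies $E_\U=E\oplus_m\H$ with the correct $2$-homogeneous modular on $\H$; the only thing left is to check that the Jordan--von Neumann constants satisfy $a(E_n)\to 1$, so that $\H$ is hilbertian.

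For that I would extract the upper bound on $a(E_n)$ that is hidden in condition (a). Applying (a) to $x=\tfrac12(u+v)$, $y=\tfrac12(u-v)$ turns it into the equivalent ``upper'' inequality $(\|u+v\|^{p_n}+\|u-v\|^{p_n})^{2/p_n}\le 2(\|u\|^2+\|v\|^2)$; raising to the power $p_n/2$ and bounding the left side below by the power-mean inequality $\|u+v\|^{p_n}+\|u-v\|^{p_n}\ge 2^{1-p_n/2}(\|u+v\|^2+\|u-v\|^2)^{p_n/2}$ yields $\|u+v\|^2+\|u-v\|^2\le 2^{2-2/p_n}(\|u\|^2+\|v\|^2)$, i.e.\ $a(E_n)\le 2^{1-2/p_n}$. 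Since $p_n\to 2$ while $a(E_n)\ge 1$ always, this forces $a(E_n)\to 1$, and Proposition \ref{ultrapower-N} delivers (i).

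The substance lies in condition (ii): every linear isometry $T\colon E\to E\oplus_m H$ maps $E$ onto $E$. Because $\Theta=\|\cdot\|^2$ throughout, every isometry is automatically modular preserving, so the issue is purely geometric. My plan is to exploit condition (b): for each $n$ and each $y\in\mathcal B_n$ there is a partner $x$ with $(x,y)$ an $\ell_{p_n}$-pair, and since $T$ is isometric, $(Tx,Ty)$ is again an $\ell_{p_n}$-pair in $E\oplus_m H$. Writing $Tx=w_1+h_1$, $Ty=w_2+h_2$ with $w_i\in E$, $h_i\in H$, the function $\lambda\mapsto\|Tx+\lambda Ty\|^2$ equals $(1+|\lambda|^{p_n})^{2/p_n}$, whose symmetric second difference at $0$ is $\tfrac{4}{p_n}|\lambda|^{p_n}+o(|\lambda|^{p_n})$; meanwhile the $H$-part contributes an exact $2\lambda^2\|h_2\|^2$ and the $E$-part a nonnegative quantity (the symmetric second difference of a squared norm is $\ge 0$ by convexity). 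Dividing by $\lambda^2$ and letting $\lambda\to 0$, the fact that $p_n>2$ forces $\|h_2\|=0$; applying the same to the $\ell_{p_n}$-pair $(Ty,Tx)$ gives $\|h_1\|=0$. Hence $Tx,Ty\in E$, and ranging over $\mathcal B_n$ and all $n$ we get $T(E)\subseteq E$.

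It remains to prove $T$ is onto, and this is where I expect the real work. I would rerun the second-difference bookkeeping inside $E=(\oplus_m E_m)_{\ell_2}$: writing $Tx=(a_m)$, $Ty=(b_m)$, each block gives $\Delta_m(\lambda)=\|a_m+\lambda b_m\|^2+\|a_m-\lambda b_m\|^2-2\|a_m\|^2\ge 0$ with $\sum_m\Delta_m(\lambda)=\tfrac{4}{p_n}|\lambda|^{p_n}+o(|\lambda|^{p_n})$. A block with $a_m=0$ but $b_m\ne 0$ would contribute a genuine $2\lambda^2\|b_m\|^2$, impossible against the $|\lambda|^{p_n}$ total, and condition (a) forces $\Delta_m(\lambda)\gtrsim|\lambda|^{p_m}$ whenever $a_m,b_m\ne 0$, so a block with $p_m<p_n$ is excluded too. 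The hard part is to upgrade this one-sided information to genuine localization --- that each $\ell_{p_n}$-pair image concentrates in a single block of exponent $p_n$ --- which I expect to require the full functional equation $\sum_m\|a_m+\lambda b_m\|_{E_m}^2=(1+|\lambda|^{p_n})^{2/p_n}$ (valid for all $\lambda$, not just infinitesimally) together with the strict convexity coming from (a) that prevents an $\ell_{p_n}$-pair from being spread by the $\ell_2$-glue. Once $T$ maps each $E_n$ into a matching block and, by condition (b) and finite-dimensionality, onto it, surjectivity of $T$ on $E$ follows, with Lemma \ref{isom-modular-sum} available to track complementary summands. I anticipate this block-localization and surjectivity step to be the main obstacle, the infinitesimal analysis alone yielding only the estimate $p_m\ge p_n$ on the active blocks.
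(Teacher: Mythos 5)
Your verification of condition (i) and your two rounds of second-difference analysis are correct and coincide with the paper's proof: the paper likewise deduces $a(E_n)\le 2^{1-2/p_n}$ from (a) (via H\"older), kills the $H$-component of $Ty$ for an $\ell_{p_n}$-pair $(x,y)$ using only (b) and the comparison of $(1+|\lambda|^{p_n})^{2/p_n}-1$ with $\lambda^2\|v_H\|^2$ as $\lambda\to 0$, and then uses (a) blockwise to show that the image of $E_n$ has no component in any $E_m$ with $p_m<p_n$ (treating $u(m)=0$ and $u(m)\ne 0$ separately, exactly as you do).

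The genuine gap is at the end: you stop at the ``one-sided'' inclusion $T(E_n)\subset\mathop{\oplus}\limits_{p_m\ge p_n}E_m$ and announce that the main obstacle is to upgrade this to exact block localization $T(E_n)\subset E_n$ before surjectivity can be deduced. That upgrade is not needed, and you leave the proof unfinished at precisely the point where the paper closes it with a soft argument. The key observation you are missing is that, because $p_m>2$ for all $m$ and $p_m\to 2$, the index set $\{m: p_m\ge \bar p_k\}$ is \emph{finite} for each value $\bar p_k$ of the exponent sequence; hence each $G_k=\mathop{\oplus}\limits_{p_m\ge\bar p_k}E_m$ is finite dimensional. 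Your estimate already gives $T(G_k)\subset G_k$, and an injective linear map of a finite-dimensional space into itself is onto, so $T(G_k)=G_k$ for every $k$. Since $\bigcup_k G_k$ is dense in $E$ and the range of an isometry is closed, $T(E)=E$. (The exact localization $T(\bar E_k)=\bar E_k$ is recorded in the paper only as Remark \ref{remark:1}, derived afterwards from the surjectivity via Lemma \ref{isom-modular-sum}; it is a consequence of the proposition, not an ingredient of its proof.) So your proposal would be complete once this finite-dimensionality observation replaces the anticipated hard localization step; as written, the surjectivity half of condition (ii) is not established.
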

\begin{proof}
1) By the hypothesis (a) and H\"older's inequality we have for every $x,y\in E_n$
\[\|x+y\|^2_{E_n}+ \|x-y\|_{E_n}^2\ge 2\times 2^{-2/r_n}(\|x\|_{E_n}^2+\|y\|_{E_n}^2)\]
with $\frac 1{r_n}=\frac 12-\frac 1{p_n}$. Thus by (\ref{eq:a(X)}),  
$a(E_n)\le 2^{2/r_n}\to 1$ as $n\to \infty$, and it follows from Proposition \ref{ultrapower-N} that every ultrapower of $E$ is of the form $E\oplus_2 \H$, where $\H$ is a Hilbert space.

2) Let $S: E_n\to E\oplus_2 H$ be a linear isometric embedding, we show that its range is included in $E$ (this will require only condition (b)). This will easily imply that any linear isometric embedding $T: E\to E\oplus_2 H$ sends $E$ into $E$.  Let $\mathcal B_n$ be a basis of $E_n$ as in the condition (b) of the proposition, it is sufficient to prove that $Sy\in E$ for every $y\in\mathcal B_n$. Let $x\in\mathcal B_n$ be choosen such that $(x,y)$ is an $\ell_{p_n}$-pair. Let $u=Sx$, $v=Sy$ and $u=\sum_k u(k)+u_H$, $v=\sum_k v(k)+v_H$, with $u(k),v(k)\in E_k$ and $u_H,v_H\in H$. Then 
\begin{align}\label{eq:1}
	\begin{split}
{\|u+\lambda v\|^2+ \|u-\lambda v\|^2\over 2}&= \sum_{k=1}^\infty {\|u(k)+\lambda v(k)\|^2+ \|u(k)-\lambda v(k)\|^2\over 2} \cr
 & \phantom{\|u(k)+\lambda v(k)\|^2)\|}\hskip-0cm 
+{\|u_H+\lambda v_H\|^2+ \|u_H-\lambda v_H\|^2\over 2} \,\text{.}
	\end{split}
 \end{align}
 Since $\|u\pm \lambda v\|^{p_n}= \|x\pm \lambda y\|^{p_n}= 1+|\lambda|^{p_n}$, the left side of equation (\ref{eq:1}) is equal to $( 1+|\lambda|^{p_n})^{2/p_n}$. On the other hand, by convexity of $\|\cdot\|^2$ and the parallelogram identity in $H$,  the right side of (\ref{eq:1}) is bigger than
 \[\sum_{k=1}^\infty  \|u(k)\|^2+(\|u_H\|^2+|\lambda|^2\|v_H\|^2)= \|u\|^2+|\lambda|^2\|v_H\|^2=1+|\lambda|^2\|v_H\|^2 \,\text{.}
 \]
Thus, since $p_n> 2$, we have
 \[\|v_H\|^2\le  {(1+|\lambda|^{p_n})^{2/p_n}-1\over \lambda^2}\le \frac2{p_n}|\lambda|^{p_n-2}\To_{\lambda\to 0} 0 \,\text{;}
 \]
hence $v_H$, the $H$-component of $v$, is $0$. 
 
3) Now assuming both conditions (a) and (b) we show that the range of any isometric embedding $S: E_n\to E\oplus_2 H$ is included in $\mathop{\oplus}\limits_{p_m\ge p_n} \!\!E_m$. We keep the notation of the preceding part. For every $m\ge 1$, the right side of equation (\ref{eq:1}) is by condition (a) greater than
 \[ \sum_{k\ne m} \|u(k)\|^2+ (\|u(m)\|^{p_m}+|\lambda|^{p_m}\|v(m)\|^{p_m})^{2/p_m}+\|u_H\|^2 \,\text{.}
 \]
Hence 
 \[( 1+|\lambda|^{p_n})^{2/p_n}\ge 1+ (\|u(m)\|^{p_m}+|\lambda|^{p_m}\|v(m)\|^{p_m})^{2/p_m} - \|u(m)\|^2 \]
for every $\lambda\in\mathbb R$. If $u(m)=0$ we get
\[( 1+|\lambda|^{p_n})^{2/p_n}\ge 1+ |\lambda|^2 \|v(m)\|^2\]
and deduce $v(m)=0$ in the same way we did for $v_H$. If $u(m)\ne 0$ we get
\begin{align*}
( 1+|\lambda|^{p_n})^{2/p_n}-1&\ge \|u(m)\|^2\big( (1+|\lambda|\|v(m)\|/\|u(m)\|)^{p_m}-1\big)^{2/p_m}\\ 
&\sim \frac 2{p_m} \|u(m)\|^{2-p_m}\|v(m)\|^{p_m}|\lambda|^{p_m}\ \hbox{ as } \lambda\to 0 \,\text{;}
\end{align*}
thus if $p_m<p_n$ we get
\[ \|v(m)\|^{p_m} \lesssim \frac{p_m}{p_n}\|u(m)\|^{p_m-2}|\lambda|^{p_n-p_m}\To_{\lambda\to 0} 0 \,\text{.}
\]
Hence $v(m)$, the $E_m$-component of $v$, must vanish. Finally $Sy\in \mathop{\oplus}\limits_{p_m\ge p_n} \!\!E_m$ as was claimed.

4) Now let $T: E\to E\oplus_2 H$ be a linear isometric embedding. Let us denote by $(\bar p_k)$ an enumeration of the distinct values of the $p_n$'s (for fixing ideas we may assume the sequence $(\bar p_k)$ to be strictly decreasing). Note that since $p_n>2$ and $p_n\to 2$, each set $A_k=\{n: p_n=\bar p_k\}$ is finite.  For every $k\ge 1$ set $G_k= \mathop{\oplus}\limits_{p_n\ge \bar p_k} E_n$. By part (3) above, we have that $T(G_k)\subset G_k$. Since $G_k$ is finite dimensional and $T$ is isometric it follows that $T(G_k)=G_k$. Hence the range of $T$ contains $\bigcup_k G_k$, a dense subspace of $E$, and since this range is closed it contains $E$.
 \end{proof}
\begin{remark}\label{remark:1}
We have in fact the more precise result that $T(\bar E_k)= \bar E_k$ for every $k\ge 1$, where  $\bar E_k= \mathop{\oplus}\limits_{p_n= \bar p_k} E_n$.  For $k=1$ we have $\bar E_1=G_1$ and thus $T(\bar E_1)=\bar E_1$. For $k\ge 2$ it will be sufficient to prove that $T(\bar E_k)\subset \bar E_k$. This is done inductively using $G_k= \bar E_k\oplus_m G_{k-1}$ and Lemma \ref{isom-modular-sum}.
\end{remark}
\begin{example}
$E=\big(\oplus \ell_{p_n}^{d_n}\big)_2$ with $p_n>2$, $p_n\to 2$ and $d_n\ge 2$ satisfies the hypotheses of Proposition \ref{prop:rigidity}.
Condition (b) is clearly satisfied. As for condition (a), we have for $x,y\in\ell_p$, $p\ge 2$:
\begin{align*}
\frac {\|x+y\|^2_p+\|x-y\|^2_p}2 &= \frac {\||x+y|^2\|_{p/2}+\||x-y|^2\|_{p/2}}2 \\
&\ge \bigg\|\frac{|x+y|^2+|x-y|^2}2 \bigg\|_{p/2} \ \hbox{ (by convexity since } p/2\ge 1)\\
&= \||x|^2+|y|^2\|_{p/2}= \||x^2+y^2|^{1/2}\|_p^2\\ 
&\ge \|(|x|^p+|y|^p)^{1/p}\|_p^2  \hskip 1.3cm \hbox{ (since } p\ge 2)\\
&= (\|x\|_p^p+\|y\|_p^p)^{2/p} \,\text{.}
\end{align*}
Thus  $E$ is uncountably categorical. On the other hand if $d(E_n,\ell_2^{d_n})=d_n^{\frac 12-\frac 1{p_n}}\to\infty$ then $E$ is not linear-topologically isomorphic to a Hilbert space.
\end{example}
\begin{example}
$E=\big(\oplus S_{p_n}^{d_n}\big)_2$ with $p_n>2$, $p_n\to 2$, and $d_n\ge 2$, where $S_p^d$ is the Schatten class of exponent $p$ and dimension $d^2$ (consisting of $d\times d$ matrices with complex coefficients).

Let $\mathcal B_n$ be the basis of $S_{p_n}^{d_n}$ consisting of the matrix units $(e_{i,j})_{1\le i,j\le d_n}$. For each matrix unit $e_{i,j}$ consider another matrix unit $e_{k,\ell}$ with $i\ne k, j\ne \ell$. Then the pair $(e_{ij},e_{k\ell})$ is a $\ell_{p_n}$-pair in $S_{p_n}^{d_n}$, and the condition (b) in Proposition \ref{prop:rigidity} is satisfied. As for condition (a) we reason by interpolation. Indeed condition (a) means exactly that for $p=p_n$ the inverse of the operator $M: (x,y)\mapsto (\frac{x+y}{\sqrt 2}, \frac{x-y}{\sqrt 2})$ is contractive from $\ell_2^2(S_p^d)$ to $\ell_p^2(S_{p}^d)$. Since these spaces are complex interpolation spaces, more precisely $\ell_2^2(S_p^d)=(\ell_2^2(S_2^d),\ell_2^2(S_\infty^d))_\theta$ and $\ell_p^2(S_{p}^d)=(\ell_2^2(S_2^d),\ell_\infty^2(S_\infty^d))_\theta$ for $\theta=1-\frac 2p$, it is sufficient to verify contractivity in the cases $p=2$ and $p=\infty$. For $p=2$, $S_p^d=S_2^d$ is a Hilbert space and condition (b) follows from the parallelogram identity. For $p=\infty$, $S_p^d=M_d(\mathbb C)$ with the matrix norm (which we denote by $\|\cdot\|_\infty$) and we have for $x,y\in M_d$
\begin{align*}
\frac {\|x+y\|^2_\infty+\|x-y\|^2_\infty}2 &= \frac {\|(x+y)^*(x+y)\|_\infty+\|(x-y)^*(x-y)\|_\infty}2 \\
&\ge \bigg\|\frac{(x+y)^*(x+y)+(x-y)^*(x-y)}2 \bigg\|_\infty \\
&= \|x^*x+y^*y\|_\infty\\ 
&\ge \max(\|x^*x\|_\infty, \|y^*y\|_\infty) = \big(\max(\|x\|_\infty, \|y\|_\infty)\big)^2 \,\text{.}
\end{align*}
Thus here again  $E$ is uncountably categorical. Note that if $d_n^{\frac 12-\frac 1{p_n}}\to\infty$, it follows from \cite[Theorem 2.1]{P}
that E is not linear-topologically embeddable in any space with local unconditional structure with nontrivial cotype (in particular, any Banach lattice with nontrivial concavity).
\end{example}

Next we present another criterion for uncountable categoricity, similar to Proposition \ref{prop:rigidity}, but with exponents strictly less than 2.

\begin{proposition}\label{prop:rigidity:2}
Assume that for some sequence of exponents  $1\le p_n<2$, with $p_n\to 2$,  the following conditions are satisfied:

\noindent a) For every $n$, and every $x,y\in E_n$ we have
\[\|x+y\|^2_{E_n}+ \|x-y\|_{E_n}^2\le 2(\|x\|_{E_n}^{p_n}+\|y\|_{E_n}^{p_n})^{2/p_n} \,\text{.}\]
b) For every $n$ there exists a basis $\mathcal B_n$ of $E_n$ such that for every $y\in \mathcal B_n$ there is $x\in \mathcal B_n$ such that $(x,y)$ is an $\ell_{p_n}$-pair, that is $\|x+\lambda y\|_{E_n}^{p_n}=1+|\lambda|^{p_n}$ for every $\lambda\in\reel$.
\smallskip

Then the hypotheses of Theorem \ref{Criterion:UC} are satisfied, and the Banach space $E=(\mathop{\oplus}\limits_n E_n)_{\ell_2}$ is uncountably categorical.
\end{proposition}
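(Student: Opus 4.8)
The plan is to mirror the proof of Proposition~\ref{prop:rigidity} almost verbatim, reversing each inequality wherever the exponent condition $p_n<2$ (rather than $p_n>2$) forces a change of direction. The global structure is identical: first verify that the hypotheses of Theorem~\ref{Criterion:UC} are met, then analyze isometric embeddings $S\colon E_n\to E\oplus_2 H$ step by step, killing the Hilbert component and the ``wrong-exponent'' components one at a time.

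\smallskip
First I would check condition (i) of Theorem~\ref{Criterion:UC}. By hypothesis (a) and H\"older's inequality (applied in the reverse direction, since now $p_n<2$), one gets a \emph{lower} bound $\|x+y\|^2+\|x-y\|^2\ge 2\cdot 2^{-2/r_n}(\|x\|^2+\|y\|^2)$ with $\frac1{r_n}=\frac1{p_n}-\frac12$; combined with the reversed inequality coming from the upper bound in (a), this yields $a(E_n)\to 1$ via (\ref{eq:a(X)}). (Equivalently, one may invoke the duality $a(E_n^*)=a(E_n)$ and reduce to the case $p_n>2$ already treated, since the dual of an $\ell_{p_n}$-pair is an $\ell_{p_n^*}$-pair with $p_n^*>2$.) Proposition~\ref{ultrapower-N} then gives $E_\U=E\oplus_2\H$. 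Next I would redo part (2) of the earlier proof: for an $\ell_{p_n}$-pair $(x,y)$ and $S$ an isometric embedding, expand $\|u\pm\lambda v\|^2$ across the direct sum exactly as in (\ref{eq:1}). The left side is $(1+|\lambda|^{p_n})^{2/p_n}$; the Hilbert component $v_H$ contributes $+|\lambda|^2\|v_H\|^2$. Because $p_n<2$ now, the convexity/parallelogram lower bounds run the other way, so I expect to extract an inequality forcing $\|v_H\|^2\le\big((1+|\lambda|^{p_n})^{2/p_n}-1\big)/\lambda^2$, whose right side still tends to $0$ as $\lambda\to 0$ (the small-$\lambda$ asymptotics $\sim\frac2{p_n}|\lambda|^{p_n-2}$ persist); hence $v_H=0$.

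\smallskip
The analogue of part (3)---showing $S(E_n)\subset\bigoplus_{p_m\ge p_n}E_m$---is where the direction reversal matters most delicately, and I expect this to be the main obstacle. With $p_n>2$ the author used (a) to \emph{lower}-bound the right side of (\ref{eq:1}) componentwise, which is what let a given block survive or vanish according to whether $p_m<p_n$. Since (a) is now an \emph{upper} bound, I anticipate needing to either (i) reverse the roles of the blocks---sorting the exponents in increasing rather than decreasing order so that one shows $S(E_n)\subset\bigoplus_{p_m\le p_n}E_m$---or (ii) pass to dual spaces and apply the $p_n>2$ case to $E^*=(\oplus E_n^*)_{\ell_2}$, using that $S^*$ is again an isometric embedding of the dual direct sums and that $\ell_{p_n}$-pairs dualize to $\ell_{p_n^*}$-pairs. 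The duality route (ii) seems cleanest and most likely to avoid recomputing the small-$\lambda$ asymptotics, so I would try that first; the subtlety is checking that the block decomposition and the finiteness of each $A_k=\{n:p_n=\bar p_k\}$ transfer correctly through duality (which they do, since $(p_n)$ decreasing to $2$ corresponds to $(p_n^*)$ increasing to $2$).

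\smallskip
Finally, part (4) is unchanged: enumerate the distinct exponents as $(\bar p_k)$, set $G_k$ to be the appropriate partial sum, use the containment from part (3) together with finite-dimensionality and injectivity of $T$ to upgrade $T(G_k)\subset G_k$ to $T(G_k)=G_k$, and conclude that the range of $T$ contains a dense subspace, hence all of $E$. This verifies condition (ii) of Theorem~\ref{Criterion:UC}. Together with condition (i) from part (1), the theorem yields that the elementary class of $E=(\oplus E_n)_{\ell_2}$ is exactly $\{E\oplus_m H\}$ and hence $E$ is uncountably categorical, as claimed.
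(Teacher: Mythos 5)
Your proposal contains a genuine gap at the very step you flag as routine: the elimination of the Hilbert component $v_H$. The convexity/parallelogram \emph{lower} bound $\tfrac12(\|u+\lambda v\|^2+\|u-\lambda v\|^2)\ge \|u\|^2+|\lambda|^2\|v_H\|^2$ is still valid, and it still yields $\|v_H\|^2\le\big((1+|\lambda|^{p_n})^{2/p_n}-1\big)/\lambda^2\sim \tfrac{2}{p_n}|\lambda|^{p_n-2}$; but for $p_n<2$ this quantity tends to $+\infty$ as $\lambda\to0$ (and to $1$ as $\lambda\to\infty$), so the inequality gives no information whatsoever. The asymptotics do \emph{not} ``persist'' --- they reverse fatally. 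Your fallback for part (3), the duality route, also does not work as described: if $T\colon E\to E\oplus_2 H$ is an isometric embedding, its adjoint $T^*$ is a metric surjection (a quotient map), not an isometric embedding of the dual direct sums, so Proposition \ref{prop:rigidity} cannot be applied to it; moreover condition (b) (a basis of $\ell_{p_n}$-pairs) does not obviously dualize to a basis of $\ell_{p_n^*}$-pairs in $E_n^*$.

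The paper's actual proof is organized quite differently, along the lines of your (unpursued) alternative (i). The distinct exponents are enumerated in \emph{increasing} order $\bar p_1<\bar p_2<\cdots$, and one works with an $\ell_{\bar p_1}$-pair $(x,y)$ in the lowest block $\bar E_1$. Hypothesis (a) is used as an \emph{upper} bound on every component of $\tfrac12(\|u+\lambda v\|^2+\|u-\lambda v\|^2)$ simultaneously (with exponent $\bar p_1$ on the $\bar E_1$-blocks and $\bar p_2$ on everything else, including $H$); the reverse Minkowski inequality in $\ell_{\bar p_1/2}$ (legitimate since $\bar p_1/2\le 1$) aggregates these into just two terms, giving
\[(1+|\lambda|^{\bar p_1})^{2/\bar p_1}\le \big(\|u_{\bar E_1}\|^{\bar p_1}+\|\lambda v_{\bar E_1}\|^{\bar p_1}\big)^{2/\bar p_1}+\big(\|u_{G_1}\|^{\bar p_2}+\|\lambda v_{G_1}\|^{\bar p_2}\big)^{2/\bar p_2}.\]
Since $\bar p_1<\bar p_2$, the coefficient of $|\lambda|^{\bar p_1}$ dominates on both sides as $\lambda\to0$, and comparing these coefficients forces $\|u_{\bar E_1}\|^{2-\bar p_1}\|v_{\bar E_1}\|^{\bar p_1}\ge 1$, hence $\|u_{\bar E_1}\|=\|v_{\bar E_1}\|=1$ and all other components of $u$ and $v$ (Hilbert component included) vanish in one stroke --- there is no separate ``kill $v_H$'' step. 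One then gets $T(\bar E_1)=\bar E_1$ by finite-dimensionality, deduces $T(G_1)\subset G_1$ from Lemma \ref{isom-modular-sum}, and iterates on $G_1$. You would need to replace your parts (2) and (3) entirely by an argument of this kind; part (1) and the final bookkeeping in part (4) are essentially fine.
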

\begin{proof}
1) Hypothesis (a) implies that for every $x,y\in E_n$
\[\|x+y\|^2_{E_n}+ \|x-y\|_{E_n}^2\le 2\times 2^{2/r_n}(\|x\|_{E_n}^2+\|y\|_{E_n}^2)\]
where $\frac 1{r_n}=\frac 1{p_n}-\frac 12$, and therefore $a_n(E_n)\le 2^{2/r_n}$ by (\ref{def:a(X)}).

2) We prove that if $S:\bar E_1\to E\oplus_2 H$ is an isometric embedding then $S\bar E_1= \bar E_1$. (As before, $\bar E_k=\big(\mathop{\oplus}\limits_{p_n=\bar p_k} E_n\big)_2$, where $\bar p_1<\bar p_2 <\dots$ is the sequence of distinct values of the $p_n$'s rearranged now in increasing order).

Let $(x,y)$ be an $\ell_{\bar p_1}$-pair in $\bar E_1$; we claim that the $H$-component as well as the $E_n$-components for $p_n>\bar p_1$ of $v=Sy$ all vanish. Let $u=Sx$, $v=Sy$; then for every $\lambda\in \reel$
\begin{align}
\notag(1+|\lambda|^{\bar p_1})^{2/\bar p_1}&= \frac{\|u+\lambda v\|^2+\|u-\lambda v\|^2}2\\  \notag
&\le \sum_{n=1}^\infty \big(\|u(n)\|^{\bar p_n}+\|\lambda v(n)\|^{\bar p_n}\big)^{2/\bar p_n} + \|u_H\|^2+\|\lambda v_H\|^2\\ 
&\le  \sum_{p_n=\bar p_1}\big(\|u(n)\|^{\bar p_1}+\|\lambda v(n)\|^{\bar p_1}\big)^{2/\bar p_1} + 
\sum_{p_n\ge\bar p_2}\big(\|u(n)\|^{\bar p_2}+\|\lambda v(n)\|^{\bar p_2}\big)^{2/\bar p_2}  \label{ineq}\\
&\phantom{le  \sum_{p_n=\bar p_1}\big(\|u(n)\|^{\bar p_1}+\|\lambda v(n)\|^{\bar p_1}\big)^{2/\bar p_1} }+\big(\|u_H\|^{\bar p_2}+\|\lambda v_H\|^{\bar p_2}\big)^{2/\bar p_2} \,\text{.} \notag
\end{align}
Let $u_{\bar E_1}=\sum\limits_{p_n=\bar p_1} u(n)$, $v_{\bar E_1}=\sum\limits_{p_n=\bar p_1} v(n)$ be the components of $u,v$  in $\bar E_1$; then by the reverse Minkowski inequality in $\ell_{\bar p_1/2}$ (note that $\bar p_1/2\le 1$) we have
\begin{align*}
 \sum_{p_n=\bar p_1}\big(\|u(n)\|^{\bar p_1}+\|\lambda v(n)\|^{\bar p_1}\big)^{2/\bar p_1} &\le \bigg(
 \big(\sum_{p_n=\bar p_1}\|u(n)\|^2\big)^{\bar p_1/2}+\big(\sum_{p_n=\bar p_1}\|\lambda v(n)\|^2\big)^{\bar p_1/2}\bigg)^{2/\bar p_1}\\
 &= \big(\|u_{\bar E_1}\|^{\bar p_1}+\|\lambda v_{\bar E_1}\|^{\bar p_1}\big)^{2/\bar p_1}  \,\text{.}
\end{align*}
Set $G_1=\big(\mathop{\oplus}\limits_{k\ge 2}\bar E_k\oplus H\big)_2$, and let $u_{G_1}, v_{G_1}$ be the components of $u,v$ in $G_1$; treating similarly the last two terms in (\ref{ineq}), we obtain
\begin{equation}\label{ineq:2}
(1+|\lambda|^{\bar p_1})^{2/p_1}\le \big(\|u_{\bar E_1}\|^{\bar p_1}+\|\lambda v_{\bar E_1}\|^{\bar p_1}\big)^{2/\bar p_1} + \big(\|u_{G_1}\|^{\bar p_2}+\|\lambda v_{G_1}\|^{\bar p_2}\big)^{2/\bar p_2} \,\text{.}
\end{equation}
The  left side of inequality (\ref{ineq:2}) is 
\[ 1+\frac 2{p_1}|\lambda|^{\bar p_1}+o(|\lambda|^{\bar p_1})\]
while the right side of (\ref{ineq:2}) is
\begin{align*}
& \|u_{\bar E_1}\|^2+\frac 2{\bar p_1} ||u_{\bar E_1}\|^{2-\bar p_1} \|v_{\bar E_1}\|^{\bar p_1}|\lambda|^{p_1}+o(|\lambda|^{\bar p_1})
 +\|u_{G_1}\|^2+\frac 2{\bar p_2} ||u_{G_1}\|^{2-\bar p_2} \|v_{G_1}\|^{\bar p_2}|\lambda|^{\bar p_2}+o(|\lambda|^{\bar p_2})
  \\ &= 1+\frac 2{\bar p_1} ||u_{\bar E_1}\|^{2-\bar p_1} \|v_{\bar E_1}\|^{\bar p_1}|\lambda|^{\bar p_1}+o(|\lambda|^{\bar p_1}) \,\text{.}
\end{align*}
Comparing the leading terms of both sides of inequality (\ref{ineq:2}) we obtain $1\le ||u_{\bar E_1}\|^{2-\bar p_1} \|v_{\bar E_1}\|^{\bar p_1}$; since $||u_{\bar E_1}\|\le \|u\|=1$, $\|v_{\bar E_1}\|\le\|v\|=1$ this implies $\|u_{\bar E_1}\|=\|v_{\bar E_1}\|=1$. Then $ ||u_{G_1}\|^2=1-||u_{\bar E_1}\|^2=0$ and similarly $||v_{G_1}\|^2=0$, and $u,v\in\bar E_1$ as was claimed. Thus $S(\bar E_1)\subset \bar E_1$, and in fact $S(\bar E_1)=\bar E_1$ since the dimension is finite.

3) If now $T:E \to E\oplus H$ is an isometric embedding, then by part (2) we have $T(\bar E_1)=\bar E_1$. It follows that also $T(G_1)\subset G_1$ by  Lemma \ref{isom-modular-sum}. Now starting with $G_1$ in place of $E\oplus H$, the reasoning of part (2) shows that $T(\bar E_2)=\bar E_2$, etc.
\end{proof}

\begin{examples}
$E=\big(\oplus \ell_{p_n}^{d_n}\big)_2$ and $E=\big(\oplus S_{p_n}^{d_n}\big)_2$ with $1\le p_n<2$, $p_n\to 2$, and $d_n\ge 2$ satisfy the hypotheses of Proposition \ref{prop:rigidity:2}.
\end{examples}
The proof that these examples satisfy condition (a) of Proposition \ref{prop:rigidity:2} is by duality ($a(E_n)=a(E_n^*)$). 

\medskip
In certain cases we can mix the examples of Propositions \ref{prop:rigidity} and \ref{prop:rigidity:2}. 

\begin{corollary}\label{cor:rigid:mix}
 Let $E=(\mathop{\oplus}\limits_n E_n)_2$ and  $F=(\mathop{\oplus}\limits_n F_n)_2$ be two direct sums satisfying respectively the hypotheses of Propositions \ref{prop:rigidity:2} and \ref{prop:rigidity} with respective exponent sequences $1\le p_n<2$ and  $2<q_n<\infty$. Assume moreover that for some constant $C$ we have
\begin{equation}\label{eq:2-smooth}
\forall x,y\in F_n,\ \|x+y\|^2+\|x-y\|^2\le 2(\|x\|^2+\|Cy\|^2)
\end{equation}
(that is, the spaces $F_n$ are uniformly 2-uniformly smooth in the sense of \cite{BCL}).
 
 Then every linear isometric embedding $T$ of $E\oplus_2 F$ into $E\oplus_2 F\oplus_2 H$, where $H$ is any Hilbert space, maps $E$ onto $E$ and $F$ onto $F$. 
In particular the hypotheses of Theorem \ref{Criterion:UC} are satisfied by $E\oplus_2 F$, and hence that space is uncountably categorical.
\end{corollary}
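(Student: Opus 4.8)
The plan is to verify both hypotheses of Theorem \ref{Criterion:UC} for the $2$-sum $E\oplus_2 F$, the real work being condition (ii). For condition (i), note that part (1) of the proofs of Propositions \ref{prop:rigidity:2} and \ref{prop:rigidity} give $a(E_n)\le 2^{2/r_n}$ and $a(F_n)\le 2^{2/r'_n}$ with $r_n,r'_n\to\infty$ (since $p_n,q_n\to 2$), so the Jordan--von Neumann constants of all the summands tend to $1$. Since $E\oplus_2 F$ is the $\ell_2$-direct sum of the finite-dimensional family obtained by interleaving the $E_n$ and $F_n$, i.e.\ a Nakano direct sum with constant exponent $2$ whose summands have $a(\cdot)\to 1$, Proposition \ref{ultrapower-N} applies and shows that every ultrapower of $E\oplus_2 F$ is isomodular to $(E\oplus_2 F)\oplus_m \mathcal H$ for some Hilbert space $\mathcal H$; this is (i).

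For condition (ii) I fix a linear isometric embedding $T\colon E\oplus_2 F\to (E\oplus_2 F)\oplus_2 H$ and aim to show $T(E)=E$ and $T(F)=F$. The crucial point is the order of operations: I would first prove $T(E)=E$, then deduce $T(F)\subset F\oplus_2 H$ from Lemma \ref{isom-modular-sum} (applied with $E_1=E_2=E$, $F_1=F$, $F_2=F\oplus_2 H$), and finally obtain $T(F)=F$ directly from Proposition \ref{prop:rigidity} applied to the restriction $T|_F\colon F\to F\oplus_2 H$. One must establish $T(E)=E$ first, because the mechanism that kills the low-exponent ($E$) components of an image is only available through the reverse-Minkowski argument for exponents below $2$, whereas the high-exponent components can only be removed afterwards via Lemma \ref{isom-modular-sum}.

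To prove $T(E)=E$ I would run the leading-order argument from part (2) of the proof of Proposition \ref{prop:rigidity:2}, now carried out inside the larger target $(E\oplus_2 F)\oplus_2 H$. Writing the $E$-exponents as $\bar p_1<\bar p_2<\cdots<2$ and $\bar E_k=(\oplus_{p_n=\bar p_k}E_n)_2$ (each finite-dimensional since $p_n\to 2$), take an $\ell_{\bar p_1}$-pair $(x,y)$ in $\bar E_1$ furnished by condition (b), set $u=Tx$, $v=Ty$, and decompose $u,v$ along $\bar E_1$, the higher $\bar E_k$, the space $F$, and $H$. The left side $\tfrac12(\|u+\lambda v\|^2+\|u-\lambda v\|^2)=(1+|\lambda|^{\bar p_1})^{2/\bar p_1}$ has leading correction of order $|\lambda|^{\bar p_1}$. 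On the right side I bound the $\bar E_k$-contributions from above by condition (a) of Proposition \ref{prop:rigidity:2} and the reverse Minkowski inequality exactly as in that proof, the $H$-contribution exactly via the parallelogram law, and the $F$-contributions from above by hypothesis (\ref{eq:2-smooth}), which yields $\tfrac12(\|a+\lambda b\|^2+\|a-\lambda b\|^2)\le \|a\|^2+C^2|\lambda|^2\|b\|^2$ for $a,b\in F_n$. This is exactly why the extra $2$-uniform smoothness assumption is imposed: condition (a) of Proposition \ref{prop:rigidity} only gives a \emph{lower} bound for the $F$-terms, which is useless here, whereas (\ref{eq:2-smooth}) makes every $F$- and $H$-contribution correct the right side only at order $|\lambda|^2$. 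Since $\bar p_1$ is strictly smaller than every other exponent that appears (the higher $\bar p_k$ and the value $2$ coming from $F$ and $H$), all contributions other than that of $\bar E_1$ are $o(|\lambda|^{\bar p_1})$; comparing the leading $|\lambda|^{\bar p_1}$-terms forces $\|u_{\bar E_1}\|^{2-\bar p_1}\|v_{\bar E_1}\|^{\bar p_1}\ge 1$, and with $\|u_{\bar E_1}\|,\|v_{\bar E_1}\|\le 1$ this gives $\|u_{\bar E_1}\|=\|v_{\bar E_1}\|=1$. Hence all other components of $u$ and $v$ vanish, so $T(\bar E_1)\subseteq \bar E_1$ and in fact $T(\bar E_1)=\bar E_1$ by finite-dimensionality. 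Lemma \ref{isom-modular-sum} then pushes the remaining factors into the complement, and repeating the argument for $\bar E_2,\bar E_3,\dots$ (the dominated summands being $F$, $H$, and the strictly higher $\bar E_k$) yields $T(\bar E_k)=\bar E_k$ for every $k$, whence $T(E)=E$.

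With $T(E)=E$ in hand, Lemma \ref{isom-modular-sum} gives $T(F)\subset F\oplus_2 H$, and then $T|_F\colon F\to F\oplus_2 H$ is precisely the kind of isometric embedding treated in Proposition \ref{prop:rigidity}, whose proof shows $T(F)=F$. Thus $T$ maps $E$ onto $E$ and $F$ onto $F$, so $T(E\oplus_2 F)=E\oplus_2 F$, which is condition (ii); together with (i), Theorem \ref{Criterion:UC} yields that the elementary class of $E\oplus_2 F$ is uncountably categorical. I expect the main obstacle to be the argument of the previous paragraph: controlling the simultaneous interaction of three kinds of summands (low $E$-exponents, high $F$-exponents, and the Hilbert part at exponent $2$) within a single asymptotic expansion, and recognizing that the precise role of (\ref{eq:2-smooth}) is to supply the missing \emph{upper} bound for the $F$-terms so that they, like the Hilbert term, drop out at the dominant order $|\lambda|^{\bar p_1}<|\lambda|^2$.
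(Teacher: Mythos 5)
Your proposal is correct and follows essentially the same route as the paper: first run the part-(2) argument of Proposition \ref{prop:rigidity:2} with the Hilbert summand replaced by $G=F\oplus_2 H$, using (\ref{eq:2-smooth}) to give the needed \emph{upper} bound on the $F$-contributions so that they, like the $H$-terms, enter only at order $|\lambda|^2=o(|\lambda|^{\bar p_1})$; then Lemma \ref{isom-modular-sum} pushes $F$ into $F\oplus_2 H$ and Proposition \ref{prop:rigidity} finishes. Your identification of the precise role of the $2$-uniform smoothness hypothesis is exactly the point the paper's one-line proof leaves implicit.
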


\begin{proof}
 The proof that $T$ maps $E$ onto itself is the same as in Proposition \ref{prop:rigidity:2} except that we have to replace the Hilbert space $H$ by the direct sum $G= H\oplus F$. It follows from Lemma \ref{isom-modular-sum} that $T$ maps $G$ into $G$. Then apply Proposition \ref{prop:rigidity} to the restriction of $T$ to $G$.
 \end{proof}

\begin{examples}\label{examples}
$\big(\oplus \ell_{p_n}^{d_n}\big)_2$ and $\big(\oplus S_{p_n}^{d_n}\big)_2$ with $1\le p_n<\infty$, $p_n\ne 2$, $p_n\to 2$, and $d_n\ge 2$ satisfy the hypotheses of Corollary \ref{cor:rigid:mix}.
\end{examples}

\begin{proof} We recall a proof of (\ref{eq:2-smooth}) in the case of $L_p$-spaces, $p\ge 2$.
For any scalars $x,y$,  by an  inequality of Beckner (see \cite[1.e.14]{LT} for the real case; the complex case is a special case of \cite[1.e.15]{LT}) we have:
\[ {|x+y|^p+|x-y|^p\over 2}\le \left({|x+C_p\, y|^2+|x-C_p\, y|^2\over 2}\right)^{p/2} \]
with $C_p=\sqrt{p-1}$. 
We deduce when $x,y\in \ell_p^d$
\begin{align*}
{||x+y||_p^p+||x-y||_p^p\over 2}&\le \left\|\left({|x+C_p\,y|^2+|x-C_p\,y|^2\over 2}\right)\right\|_{p/2}^{p/2}   
 = \left\| |x|^2+ (p-1)|y|^2\right\|_{p/2}^{p/2} \\
&\le \left(\| |x|^2\|_{p/2}+(p-1)\| |y|^2\|_{p/2}\right)^{p/2} \hbox{(triangular inequality in $\ell_{p/2}$)}\\
&= \left(\| x\|_p^2+(p-1)\| y\|_{p}^2\right)^{p/2} \text{.}
\end{align*}
Then by concavity of the function $f(t)=t^{2/p}$:
\[ {||x+y||_p^2+||x-y||_p^2\over 2}\le  \left({||x+y||_p^p+||x-y||_p^p\over 2}\right)^{2/p}\le \| x\|_p^2+(p-1)\| y\|_{p}^2\,\,\text{.} \]
For the Schatten class case use \cite[Th. 1]{BCL} and the preceding concavity argument.
\end{proof}

\begin{remark}
In examples \ref{examples} we can drop the condition $d_n\ge 2$ and show that these spaces are uncountably categorical by  reasoning similarly as in the proof of Corollary \ref{cor:Nakano space categoricity}.
\end{remark}

\section{Addendum: finite dimensional perturbations of Hilbert spaces}\label{sec:E+H}

As a footnote to Section \ref{sec:2-direct-sums}, we prove here that for any finite dimensional normed space $E$, the direct sum $E\oplus_2 \ell_2$ is $\kappa$-categorical for every infinite cardinal number $\kappa$.  This is a relatively simple example of categoricity and the proof is reasonably short; we present it here for completeness.  For a similar but partial result for finite dimensional modular spaces and the modular direct sum, see Corollary \ref{cor:Nakano space categoricity} and the remark following it.

\begin{definition}
 A  linear projection $P$ in a Banach space $X$ is called a \emph{$2$-projection} if
 \[\forall x\in X \quad  \|x\|^2=\|Px\|^2+\|(I-P)x\|^2 \,\text{.} \]
 A closed linear subspace $E$ in $X$ is called a \emph{$2$-summand} if it is the range of a 2-projection.
\end{definition}
In other words $E$ is a 2-summand iff $X=E\oplus_2 F$ for some closed linear subspace $F$ of $X$.

\begin{theorem}\label{thm:fin dim 2-sum with Hilbert}
Let $E_0$ is a finite dimensional normed space. The following assertions are equivalent:

i) $E_0$ has no  one dimensional 2-summand;

ii)  Any linear isometric embedding $T$ of $E_0$ into the 2-direct sum $E_0\oplus_2 H$ of $E_0$ with some Hilbert space $H$ maps $E_0$ onto $E_0$.
\end{theorem}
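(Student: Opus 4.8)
The plan is to write any linear isometric embedding $T\colon E_0\to E_0\oplus_2 H$ in the form $T=(A,B)$, where $A\colon E_0\to E_0$ and $B\colon E_0\to H$ are linear; the isometry condition becomes
\[
\|x\|^2=\|Ax\|^2+\|Bx\|^2\qquad(x\in E_0).
\]
Since $E_0$ is finite dimensional, $T$ maps $E_0$ onto $E_0$ precisely when $B=0$ (for then $A$ is an isometry of $E_0$ into itself, hence onto). So condition (ii) is equivalent to the assertion that every such $T$ has $B=0$, and the task reduces to relating this to the absence of one dimensional $2$-summands.

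For the direction (ii)$\Rightarrow$(i) I would argue contrapositively. If $\K e$ is a one dimensional $2$-summand, I write $E_0=\K e\oplus_2 F$ with $\|e\|=1$, take $H=\K$ with unit vector $h$, and observe that in $E_0\oplus_2 H$ the plane $\Span\{e,h\}$ is isometric to $\ell_2^2$. Then the ``rotation'' $T(\alpha e+f)=\alpha(\cos\theta\,e+\sin\theta\,h)+f$ (with $0<\theta<\pi/2$) is a linear isometry, because $\|T(\alpha e+f)\|^2=|\alpha|^2\cos^2\theta+\|f\|^2+|\alpha|^2\sin^2\theta=|\alpha|^2+\|f\|^2=\|\alpha e+f\|^2$, yet $Te=\cos\theta\,e+\sin\theta\,h\notin E_0$, so $T(E_0)\neq E_0$ and (ii) fails.

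The substantial direction is (i)$\Rightarrow$(ii), which I would again prove contrapositively: assuming $B\neq0$, I will construct a one dimensional $2$-summand of $E_0$. Iterating $T$ (composing with $T\oplus\mathrm{id}_H$) yields for each $n$ an isometric embedding into $E_0\oplus_2 H^{\,n}$, whence
\[
\|x\|^2=\|A^nx\|^2+\sum_{k=0}^{n-1}\|BA^kx\|^2 .
\]
In particular $A$ is a contraction, $\|A^nx\|$ is nonincreasing, and $\sum_{k\ge0}\|BA^kx\|^2\le\|x\|^2$. Because $\dim E_0<\infty$ and $\|A^n\|\le1$, the eigenvalues of modulus $1$ are semisimple, so there is an $A$-invariant algebraic splitting $E_0=E_u\oplus E_s$, where $E_u$ is spanned by the eigenspaces for unimodular eigenvalues and $A^n\to0$ on $E_s$. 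Along a subsequence $n_k$ with $A^{n_k}\to\mathrm{id}$ on $E_u$ (available by almost periodicity, since $A|_{E_u}$ has unimodular semisimple spectrum), the monotone quantity $\|A^nx\|$ returns to its initial value $\|x\|$ for $x\in E_u$, forcing it to be constant; hence $BA^kx=0$ for all $k$, and in particular $B=0$ on $E_u$. For $x\in E_s$, on the other hand, $A^nx\to0$ and the identity above gives $\|x\|^2=\sum_{k\ge0}\|BA^kx\|^2$, so $x\mapsto(BA^kx)_{k\ge0}$ is a linear isometry of $E_s$ into $\ell_2(H)$; a finite dimensional space that embeds isometrically into a Hilbert space is itself Euclidean.

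It then remains to combine these facts. Evaluating the same identity along $n_k$ (using $A^{n_k}x_u\to x_u$, $A^{n_k}x_s\to0$, and $BA^k(x_u+x_s)=BA^kx_s$ because $B$ kills $E_u$) gives $\|x_u+x_s\|^2=\|x_u\|^2+\sum_{k\ge0}\|BA^kx_s\|^2=\|x_u\|^2+\|x_s\|^2$; that is, $E_0=E_u\oplus_2 E_s$ is a genuine $2$-sum. Since $B\neq0$ forces $E_s\neq0$, and $E_s$ is Euclidean, any unit vector $e\in E_s$ spans a $2$-summand of $E_s$; by transitivity of the $2$-sum decomposition, $\K e$ is then a one dimensional $2$-summand of $E_0$, contradicting (i). The main obstacle I anticipate is the treatment of the peripheral part $E_u$: one must extract the recurrence $A^{n_k}\to\mathrm{id}$ (valid over $\reel$ as well as $\mathbb C$, via the compact abelian group generated by $A|_{E_u}$) and use monotonicity of $\|A^nx\|$ to kill $B$ there, and then verify that the algebraic splitting $E_u\oplus E_s$ is actually orthogonal in the $2$-sense. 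Both steps rest essentially on the finite dimensionality of $E_0$ together with the contractivity of $A$.
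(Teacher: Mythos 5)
Your proof is correct, but it takes a genuinely different route from the paper's. Both arguments hinge on the same iterated identity $\|x\|^2=\|A^nx\|^2+\sum_{k=0}^{n-1}\|BA^kx\|^2$ (the paper writes $A=PT$, $B=QT$ for the two coordinate projections), but they diverge immediately afterwards. The paper first proves a lemma that, under (i), any isometric copy of $E_0$ inside $E_0\oplus_2 H$ meets $H$ trivially; this makes $PT$ bijective, and the punchline is an ultrafilter limit $S^\U x=\lim_{n,\U}(PT)^nx$ for which $\|S^\U S^\U x\|=\|S^\U x\|$ combined with surjectivity forces $S^\U$, hence $PT$, to be isometric, so $B=0$. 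You instead argue contrapositively through the spectral structure of the contraction $A$: the peripheral part $E_u$ (unimodular semisimple spectrum, so $A^{n_k}\to\mathrm{id}$ along a recurrent subsequence, which kills $B$ there via monotonicity of $\|A^nx\|$) against the stable part $E_s$ (where $A^n\to 0$, so $x\mapsto(BA^kx)_k$ isometrically embeds $E_s$ into $\ell_2(H)$ and $E_s$ is Euclidean), and then you check that $E_0=E_u\oplus_2E_s$ is an honest $2$-sum, so $B\ne 0$ yields a one dimensional $2$-summand. Your version buys a sharper unconditional structure theorem --- every isometric embedding of a finite dimensional $E_0$ into $E_0\oplus_2 H$ splits off a Euclidean $2$-summand of $E_0$ that carries the entire $H$-component --- at the price of the semisimplicity/recurrence machinery for power-bounded operators (including the small detour through the complexification when the scalars are real, which you correctly flag); the paper's ultrafilter limit avoids all spectral theory and any extraction of a recurrent subsequence, but uses hypothesis (i) twice through the intersection lemma instead of deriving the decomposition outright. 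The (ii)$\Rightarrow$(i) direction is essentially the same rotation construction in both.
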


To prepare for the proof of Theorem \ref{thm:fin dim 2-sum with Hilbert} we prove the following lemma:

\begin{lemma}\label{lem:intersection}
 Let $E_0$ be a Banach space without one dimensional 2-summand, and $T$ be an isometric linear embedding of $E_0$ into a 2-direct sum $E_0\oplus_2 H$ of $E_0$ with a Hilbert space $K$. Then $T(E_0)\cap H=(0)$.
\end{lemma}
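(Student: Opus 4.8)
The plan is to prove the contrapositive in disguise: assuming $T(E_0)\cap H\neq(0)$, I will produce a one-dimensional $2$-summand of $E_0$, contradicting the hypothesis. So suppose there is a nonzero vector $\xi\in T(E_0)\cap H$. Since $\xi\in T(E_0)$, we may write $\xi=Tx_0$ for some $x_0\in E_0$, and since $T$ is an isometry, $\|x_0\|_{E_0}=\|\xi\|_H$. The goal is to show that the line $\K x_0$ is a $2$-summand of $E_0$, i.e. that there is a closed complement $F_0$ with $\|a x_0+f\|^2=\|a x_0\|^2+\|f\|^2$ for all scalars $a$ and all $f\in F_0$.

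The key idea is to exploit that $\xi$ lives in the Hilbert part $H$, where the $2$-sum structure gives exact orthogonality. First I would observe that because $E_0\oplus_2 H$ is a $2$-direct sum, the vector $\xi\in H$ is orthogonal (in the $2$-sum sense) to everything in the $E_0$-factor, and more importantly to all of $T(E_0)$ in a controlled way. Concretely, for every $x\in E_0$, decompose $Tx=e(x)+h(x)$ with $e(x)\in E_0$ and $h(x)\in H$; then $\|Tx\|^2=\|e(x)\|^2+\|h(x)\|^2$. Since $\xi\in H$ and $T$ is linear, for any scalar $a$ and any $x\in E_0$ we have $T(a x_0+x)=a\xi+e(x)+h(x)$, whence
\[
\|a x_0+x\|_{E_0}^2=\|T(a x_0+x)\|^2=\|e(x)\|^2+\|a\xi+h(x)\|_H^2=\|e(x)\|^2+\|h(x)\|_H^2+|a|^2\|\xi\|_H^2+2\,\mathrm{Re}\,\langle a\xi,h(x)\rangle.
\]
To extract a clean $2$-summand I want the cross term to vanish, which is the place the parallelogram/Hilbert structure must be used and where the real work lies.

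The natural route is to identify $F_0$ as the set of $x\in E_0$ whose image $T x$ has $H$-component orthogonal to $\xi$, i.e. $\langle h(x),\xi\rangle=0$; this is a closed (in fact finite-dimensional) hyperplane, and one checks it is a linear subspace complementary to $\K x_0$. For $x\in F_0$ the cross term above disappears, giving $\|a x_0+x\|^2=|a|^2\|\xi\|^2+\|e(x)\|^2+\|h(x)\|^2=\|a x_0\|^2+\|Tx\|^2=\|a x_0\|^2+\|x\|^2$, exactly the $2$-summand identity. It remains to verify that $\K x_0\oplus F_0=E_0$ algebraically, i.e. that every $x\in E_0$ splits uniquely as $a x_0+f$ with $f\in F_0$; this follows by choosing $a$ so that $\langle h(x-a x_0),\xi\rangle=\langle h(x),\xi\rangle-a\|\xi\|^2=0$, using $h(x_0)=0$ (since $T x_0=\xi\in H$ forces $e(x_0)=0$, $h(x_0)=\xi$) — I would double-check this computation carefully, as it is the crux.

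The main obstacle I anticipate is precisely the verification that the candidate complement $F_0$ is genuinely complementary and that the cross term vanishes identically rather than merely at leading order; in particular one must confirm $h(x_0)=\xi$ and handle the inner product bookkeeping over the scalar field $\K$ (real or complex) uniformly. Once the orthogonality-based splitting $E_0=\K x_0\oplus_2 F_0$ is established, the lemma follows immediately, since a one-dimensional $2$-summand is exactly what the hypothesis on $E_0$ forbids — but here we are proving the lemma's conclusion $T(E_0)\cap H=(0)$ under that hypothesis, so the contradiction closes the argument.
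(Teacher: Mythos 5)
Your proof is correct and is essentially the paper's argument made explicit: the paper notes that $H=\K\xi\oplus_2\xi^{\perp}$ makes $\K\xi$ a $2$-summand of $X=E_0\oplus_2 H$ and then restricts the corresponding $2$-projection to the subspace $T(E_0)$ containing $\xi$, and your $F_0=\{x\in E_0:\langle h(x),\xi\rangle=0\}$ is precisely the preimage under $T$ of the kernel of that restricted projection, with the vanishing cross term and the splitting via $h(x_0)=\xi$ both checking out. The only cosmetic slip is the parenthetical ``finite-dimensional'' (the lemma assumes only that $E_0$ is a Banach space; closedness of $F_0$ follows instead from the bound $|\langle h(x),\xi\rangle|\le\|x\|\,\|\xi\|$, which your setup already gives).
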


\begin{proof}
It suffices to prove that if $Y$ is a linear subspace of $X:=E_0\oplus_2 H$ then any vector $\xi\in Y\cap H$ generates a 2-summand in $Y$. Note that since $H$ is a Hilbert space, every closed subspace is a 2-summand in $H$, in particular $H=\K\xi\oplus_2 \xi^\perp$. Then $X=\K\xi\oplus_2(E_0\oplus_2 \xi^\perp)$, and $\K\xi$ is a 2-summand in $X$. Let $P: X\to X$ be the corresponding 2-projection in $X$ with range $\K\xi$, then its restriction $P|_Y$ is a 2-projection in $Y$ with range $\K\xi$.
\end{proof}

\begin{proof}[Proof of Theorem \ref{thm:fin dim 2-sum with Hilbert}]  $ii) \Longrightarrow i)$: 
It is clear that if $E_0$ has a 2-direct decomposition $E_0=E_1\oplus_2 \K\xi_0$, with $\xi_0\ne 0$, one can define an isometric embedding $T: E_0\to E_0\oplus_2 H$ by defining $T$ as the identity on $E_1$ and $T\xi_0=\xi_1\in H$, with $\|\xi_1\|=\|\xi_0\|$. 

Let us prove now the implication $i) \Longrightarrow ii)$.
Let $T: E_0\to X:=E_0\oplus_2 H$ be an isometric embedding and $P:X\to E_0$ be the 2-projection on $E_0$ with kernel $H$. Then $PT: E_0\to E_0$ is a linear isomorphism, since $PTx=0$ is equivalent to $Tx\in H$, and $T(E_0)\cap H=(0)$ by Lemma \ref{lem:intersection}. Since $E_0$ is finite dimensional, $PT:E_0\to E_0$ is onto. Let $Q = \mbox{Id}_X - P:X \to X$, which is the 2-projection on $H$ with kernel $E_0$.  We then have:

\begin{align*}
 \|x\|^2&=\|PTx\|^2+\|QTx\|^2\\
 &= \|(PT)^2 x\|^2+\|QTPTx\|^2+\|QTx\|^2\\
 &=\dots\\
 &= \|(PT)^n x\|^2+\sum_{k=0}^{n-1}\|QT(PT)^k x\|^2 \,\text{.}
\end{align*}
Equivalently, we have a sequence of isometric linear embeddings $T_n$ of $E_0$ into $E_0\oplus_2 H_n$, where $H_n=\ell_2^n(H)$ is the 2-sum of $n$ copies of $H$, defined by
\[T_nx= ((PT)^n x, QTx,QTPTx,\dots,QT(PT)^{n-1}x) \,\text{.} \]
 All these Hilbert spaces $H_n$ can be isometrically embedded in the infinite 2-sum $H_\infty=\ell_2(H)$. For defining a limit embedding of $E_0$ into $E_0\oplus_2 H$, fix a free ultrafilter $\U$ on $\mathbb N$ and set
 \[T^\U_\infty x=(S^\U x, QTx,QTPTx,\dots, QT(PT)^nx,\dots)\]
 where 
 \[S^\U x=\lim\limits_{n,\U}\,(PT)^n x \,\text{.} \]
 This ultrafilter limit is well defined because $PT$ is a contraction and $E_0$ is finite dimensional. Then $S^\U:E_0\to E_0$ is a linear contraction.  
 Note that although $S^\U x$ depends a priori on the ultrafilter $\U$, the norm $\|S^\U x\|$ does not. In fact the sequence $(\|(PT)^nx\|)$ is non-increasing, and thus convergent, so that
 \[\|S^\U x\|=\lim_{n,\U}\|(PT)^n x\|=\lim_{n\to\infty}\|(PT)^n x\| \,\text{.} \]
 If $P_\infty$ denotes the 2-projection $E_0\oplus_2 H_\infty\to E_0$ with kernel $H_\infty$, we have clearly $S^\U=P_\infty T^\U$. Since $T^\U$ is a linear isometry it results again that $S^\U$ is a linear isomorphism of $E_0$ onto $E_0$. This map is contractive; let us show that it is in fact an isometry. We have
 \[(S^\U)^2x= S^\U\lim_{n,\U} (PT)^n x= \lim_{n,\U} S^\U (PT)^n x = \lim_{n,\U}\lim_{m,\U} (PT)^m(PT)^n x= \lim_{n,\U}\lim_{m,\U} (PT)^{m+n} x\]
 hence
 \[ \|S^\U S^\U x\|= \lim_{n,\U}\lim_{m,\U}\| (PT)^{m+n} x\| = \lim_{k\to\infty} \|(PT)^k x\|= \|S^\U x\| \,\text{.} \]
 Since $S^\U: E_0\to E_0$ is surjective, it follows that $\|S^\U y\|=\|y\|$ for every $y\in E_0$.  Then
 \[\forall x\in E_0,\quad \|x\|=\|S^\U x\|\le \|PTx\|\le \|x\|Ê\]
 and it follows that, for each $x\in E_0$, $\|PT x\| =\|Tx\|$ and thus $Tx\in E_0$.
 \end{proof}
 
\begin{corollary}
If $E$ is a finite dimensional Banach space, then the elementary class of $E \oplus_2 \ell_2$ consists exactly of all spaces $E \oplus_2 H$, where $H$ is any infinite dimensional Hilbert space.  Therefore, the elementary class of $E \oplus_2 \ell_2$ is totally categorical (\textit{i.e.}, it is $\kappa$-categorical for every infinite cardinal number $\kappa$).
\end{corollary}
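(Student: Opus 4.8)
The plan is to reduce the corollary to the second part of Theorem~\ref{Criterion:UC} together with Theorem~\ref{thm:fin dim 2-sum with Hilbert}. The only gap between those results and the statement is that Theorem~\ref{thm:fin dim 2-sum with Hilbert} requires the finite dimensional factor to have \emph{no} one dimensional $2$-summand, whereas here $E$ is completely arbitrary (and may well have such summands). So the first move is to peel off all one dimensional $2$-summands of $E$ into a Hilbert space, leaving behind a well-behaved core to which the earlier theorems apply.

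Concretely, I would first establish the following decomposition lemma: \emph{every finite dimensional normed space $E$ admits a decomposition $E=E_0\oplus_2 K$ in which $K$ is a (finite dimensional) Hilbert space and $E_0$ has no one dimensional $2$-summand.} This is proved by induction on $\dim E$. If $E$ has no one dimensional $2$-summand, take $E_0=E$ and $K=(0)$. Otherwise write $E=E^{(1)}\oplus_2\K\xi_1$ and apply the inductive hypothesis to $E^{(1)}$, obtaining $E^{(1)}=E_0\oplus_2 K'$ with $E_0$ free of one dimensional $2$-summands. Using associativity of the $2$-sum (if $X=A\oplus_2 B$ and $B=C\oplus_2 D$ then $X=A\oplus_2 C\oplus_2 D$) one gets $E=E_0\oplus_2(K'\oplus_2\K\xi_1)$, and $K:=K'\oplus_2\K\xi_1$ is a Hilbert space because a finite $2$-sum of mutually $2$-orthogonal lines is hilbertian. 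Since $\dim E^{(1)}<\dim E$, the induction terminates.

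With this in hand the rest is bookkeeping. Because $K$ is finite dimensional and $\ell_2$ is infinite dimensional, $K\oplus_2\ell_2$ is again a separable infinite dimensional Hilbert space, so $E\oplus_2\ell_2=E_0\oplus_2(K\oplus_2\ell_2)$ is linearly isometric to $E_0\oplus_2\ell_2$. I regard $E_0$ as a modular space with modular $\Theta(x)=\|x\|^2$, which is trivially $\Delta_2$ (with constant $4$), so that $\oplus_m$ coincides with $\oplus_2$. Next I verify condition (ii') of Theorem~\ref{Criterion:UC} for $E_0$: given any linear isometric embedding $T\colon E_0\oplus_2\ell_2\to E_0\oplus_2 H$ with $H$ an infinite dimensional Hilbert space, its restriction $T|_{E_0}$ is a linear isometric embedding of $E_0$ into $E_0\oplus_2 H$; since $E_0$ has no one dimensional $2$-summand, Theorem~\ref{thm:fin dim 2-sum with Hilbert} gives $T(E_0)=E_0$, which is exactly (ii'). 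Theorem~\ref{Criterion:UC} then yields that the elementary class of $E_0\oplus_2\ell_2$ consists precisely of the spaces $E_0\oplus_2 H$ with $H$ infinite dimensional Hilbert, and is $\kappa$-categorical for every infinite $\kappa$.

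Finally I translate this description back to $E$. For an infinite dimensional Hilbert space $H$, the space $K\oplus_2 H$ is again infinite dimensional Hilbert of the same hilbertian dimension as $H$, and $H\mapsto K\oplus_2 H$ is a bijection (up to linear isometry) of the infinite dimensional Hilbert spaces; hence $E\oplus_2 H=E_0\oplus_2(K\oplus_2 H)$ ranges over exactly the same isometry classes as $E_0\oplus_2 H$. Therefore the elementary class of $E\oplus_2\ell_2=E_0\oplus_2\ell_2$ is exactly the class of all $E\oplus_2 H$ with $H$ infinite dimensional Hilbert, and total categoricity follows. The only genuine content is the decomposition lemma and the matching of the hypothesis of Theorem~\ref{thm:fin dim 2-sum with Hilbert} with condition (ii') via restriction to the $E_0$ factor; everything else is a routine identification of Hilbert summands, so I expect the peeling-off argument to be the main (though modest) obstacle.
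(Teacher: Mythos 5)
Your proposal is correct and follows essentially the same route as the paper: the paper also first splits $E=E_0\oplus_2 K$ with $K$ a hilbertian $2$-summand (chosen of maximal dimension, rather than by your inductive peeling, but this is the same idea) so that $E_0$ has no one dimensional $2$-summand, then applies Theorem~\ref{thm:fin dim 2-sum with Hilbert} to verify condition (ii') of Theorem~\ref{Criterion:UC} for $E_0$, and finally identifies $E\oplus_2 H$ with $E_0\oplus_2 H$ for infinite dimensional Hilbert $H$. No gaps.
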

\begin{proof}
Let $K$ be a hilbertian subspace of $E$ of largest dimension such that $K$ is a 2-summand of $E$, and let $E_0$ be a subspace of $E$ for which $E = E_0 \oplus_2 K$.  Evidently $E_0$ has no one dimensional 2-summand.  Regarding $E_0$ as a modular space with the modular $\Phi(x):= \|x\|^2$ and applying Theorem \ref{thm:fin dim 2-sum with Hilbert}, we see that condition (ii) in Theorem \ref{Criterion:UC}  is satisfied by $E_0$ (and a fortiori condition (ii')).  Hence the elementary class of $E_0 \oplus_2 \ell_2$ consists exactly of all spaces $E_0 \oplus_2 H$, where $H$ is any infinite dimensional Hilbert space.  The proof is completed by noting that for any infinite dimensional Hilbert space $H$, the spaces $E \oplus_2 H$ and $E_0 \oplus_2 H$ are linearly isometric.  
\end{proof}


\begin{thebibliography}{99}

\bibitem{BY-08} I.\ Ben Yaacov, {\it Uncountable dense categoricity in cats}, J.\ Symb.\ Logic {\bf 70} (2005), no. 3, 829--860.

\bibitem{BBHU} I. Ben Yaacov, A. Berenstein, C. W. Henson, and A. Usvyatsov, {\it Model theory for metric structures}. Model Theory with Applications to Algebra and Analysis, Vol. 2  (Z. Chatzidakis, D. Macpherson, 
A. Pilllay, and A. Wilkie ed.), Cambridge Univ. Press (2008), 315--427.

\bibitem{BCL} K. Ball, E. A. Carlen, and E. H. Lieb, \emph{Sharp uniform convexity and smoothness inequalities for trace norms}, Invent. Math. {\bf 115} (1994), no. 3, 463-482.

\bibitem{H}  S. Heinrich, {\it Ultraproducts in Banach Spaces Theory}.  J. Reine Angew. Math. {\bf 313} (1980), 72-104.

\bibitem{HI} C. W. Henson, J. Iovino, {\it Ultraproducts in Analysis}. Analysis
and Logic (C. Finet, C. Michaux ed.), Cambridge Univ. Press (2003), 1--113.

\bibitem{JKP}  J.E. Jamison, A. Kami\'nska, and Pei-Kee Lin, \emph{ Isometries of Musielak-Orlicz spaces II,}  Studia  Math. {\bf 104} (1993),  75-89.

\bibitem{JvN} P. Jordan, J. von Neumann, \emph{On Inner Products in Linear, Metric Spaces}, Ann. of Math. (2) {\bf 36} (1935), no. 3, 719-723.

\bibitem{LT} J.~Lindenstrauss, L. Tzafriri, \emph{Classical {B}anach spaces {II}}, Springer-Verlag, 1979.

\bibitem{N}  H. Nakano, \emph{Modulared sequence spaces,}  Proc. Japan Acad. {\bf 27} (1951), 508-512.

\bibitem{P} G. Pisier, \emph{Some results on Banach spaces without local unconditional structure}, Compositio Math. {\bf 37} (1978), no. 1, 3 -19.

\bibitem{P89} G. Pisier, \emph{The volume of convex bodies and Banach space geometry}, Cambridge Univ. Press, 1989.

\bibitem{SU-11} S. Shelah, A. Usvyatsov, {\it Model theoretic stability and categoricity for
complete metric spaces}. Israel J. Math. {\bf 182} (2011), 157-198.

\bibitem{SU-14} S. Shelah, A. Usvyatsov, {\it Minimal types in stable Banach spaces}, preprint, (2014), ArXiv:1402.6513.


\end{thebibliography}
\end{document}